\documentclass[12pt]{amsart}

\usepackage[english]{babel}
\usepackage{microtype}

\usepackage{amsmath,amssymb,amsfonts,amsthm,mathtools}

\usepackage{xcolor}

\usepackage{todonotes}
\usepackage[colorlinks,linkcolor=blue,anchorcolor=blue,citecolor=blue,backref=page]{hyperref}
\hypersetup{breaklinks=true}

\usepackage[norefs,nocites]{refcheck}  

\allowdisplaybreaks

\theoremstyle{plain}
\newtheorem{theorem}{Theorem}[section]
\newtheorem{proposition}[theorem]{Proposition}
\newtheorem{lemma}[theorem]{Lemma}
\newtheorem{corollary}[theorem]{Corollary}

\theoremstyle{remark}

\numberwithin{equation}{section}

\newcommand{\om}{\omega}
\newcommand{\one}{\mathbf 1}

\title[Selberg sieve weights and Kloosterman sign changes, I]
{Selberg sieve weights and sign changes of Kloosterman sums. I.
Uniform asymptotics for shifted weights}

\author{Yixiu Xiao}
\address{School of Mathematical Sciences, Shanghai Jiao Tong University,
800 Dongchuan Road, Shanghai 200240, China}
\email{yixiuxiao98@gmail.com}

\author{Hongze Li}
\address{School of Mathematical Sciences, Shanghai Jiao Tong University,
800 Dongchuan Road, Shanghai 200240, China}
\email{lihz@sjtu.edu.cn}
\thanks{Hongze Li is the corresponding author.}

\subjclass[2020]{Primary 11N35; Secondary 11M06}
\keywords{Selberg sieve, divisor sums, shifted cutoff functions,
uniform asymptotics, Mellin inversion}

\begin{document}

\begin{abstract}
We prove an asymptotic formula for correlations of divisor sums with
quadratic cutoff functions shifted independently by \(s,u\in[0,1/2]\).
The error is \(O_g(Y/(\log Y)^2)\), uniformly for
\(Y^{1/5}\leq R\leq Y^{1/3}\), including coincident shifts.
The main term is an explicit bilinear form in the first and second
derivatives of the cutoff functions. We obtain it by taking the
residue in the outer Mellin variable and evaluating the remaining
double integral by Laplace inversion. The present paper establishes the divisor-sum estimate. Its application to the sign-change problem, including the choice of the prime cutoff and the remaining parameters, is treated in Part II.
\end{abstract}

\maketitle

\section{Introduction}
A Selberg sieve weight is the square of a truncated divisor sum.
In applications, separating a prime factor can change its cutoff
function. For the quadratic cutoff functions considered here, writing a
square-free modulus as \(pn\) replaces the cutoff function in the divisor sum
by the difference between it and a translate through
\(\log p/\log R\). This shift may tend to zero with the main
parameter. To estimate the resulting divisor sums, we need a
correlation formula whose error is uniform in the shifts.

We prove such a formula for shifted quadratic cutoff functions,
with an explicit main term and error \(O_g(Y/(\log Y)^2)\).
It remains valid when the shifts approach one another or coincide.
As a consequence, we obtain a mean-square bound for the difference
between the unshifted divisor sum and a shifted divisor sum. Part~II uses
this bound to control small prime factors in the study of
Kloosterman sign changes for square-free moduli with at most four
prime factors. The present paper establishes the divisor-sum
estimate; the choice of cutoff and its use in the sign-change
argument belong to Part~II.

\subsection{Main results}

Fix a nonnegative function \(g\in C_c^\infty((0,\infty))\), supported
in \([1,2]\), and normalized by
\[
  \int_0^\infty g(x)\,dx=1.
\]
Let \(Y>e\) and \(R>1\), and put
\[
  L=\log R,
  \qquad
  T=\log Y,
  \qquad
  \vartheta=\frac{T}{L}.
\]
Thus \(Y^{1/5}\leq R\leq Y^{1/3}\) is equivalent to
\(3\leq\vartheta\leq5\).  Let \(\mu\) be the Möbius function,
let \(\om(n)\) be the number of distinct prime divisors of \(n\), and
write \(\one_E\) for the indicator of a condition \(E\).  All sums over
\(m\) below are over the positive integers.
The notation \(O_g(\cdot)\) and \(\ll_g\) allows the implied constant
to depend on \(g\), but not on \(Y,R,s\), or \(u\).

For \(v\in\mathbb R\), write \(v_+=\max\{v,0\}\).  For
\(0\leq s<1\), define the shifted quadratic profile
\[
  G_s(x)=(x-s)_+^2
\]
and the associated divisor sum
\[
  \Lambda_{s,R}(m)
  =
  \sum_{d\mid m}
    \mu(d)G_s\!\left(\frac{\log(R/d)}{\log R}\right)
    \one_{d\leq R}.
\]
The effective divisor cutoff is \(R^{1-s}\); equivalently,
\[
  \Lambda_{s,R}(m)
  =
  \frac1{L^2}
  \sum_{\substack{d\mid m\\d\leq R^{1-s}}}
    \mu(d)\bigl(\log(R^{1-s}/d)\bigr)^2.
\]

Let \(W^{2,2}(0,1)\) be the Sobolev space of functions whose weak
derivatives through order two lie in \(L^2(0,1)\).  For
\(U,V\in W^{2,2}(0,1)\), set
\begin{equation}\label{eq:bilinear-form}
\begin{aligned}
  \mathcal B_\vartheta(U,V)
  &:={}
  2\vartheta\int_0^1U'(x)V'(x)\,dx\\
  &\quad+
  \vartheta\int_0^1U''(x)V''(x)
    (1-x)(\vartheta-1+x)\,dx.
\end{aligned}
\end{equation}
Here the derivatives are understood in the weak sense.  In particular,
\(G_s\in W^{2,2}(0,1)\), with
\[
\begin{aligned}
  G_s'(x)&=2(x-s)_+,\\
  G_s''(x)&=2\one_{(s,1)}(x)
  \quad\text{almost everywhere on }(0,1).
\end{aligned}
\]

\begin{theorem}
\label{thm:kernel}
As \(Y\to\infty\), uniformly for
\[
  Y^{1/5}\leq R\leq Y^{1/3},
  \qquad
  s,u\in[0,1/2],
\]
one has
\begin{equation*}
\begin{aligned}
  &\sum_m
    g\!\left(\frac mY\right)
    \mu^2(m)2^{\om(m)}
    \Lambda_{s,R}(m)\Lambda_{u,R}(m)\\
  &\qquad=
  \frac{Y}{\log Y}\mathcal B_\vartheta(G_s,G_u)
  +O_g\!\left(\frac{Y}{(\log Y)^2}\right).
\end{aligned}
\end{equation*}
\end{theorem}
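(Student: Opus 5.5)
Expanding both divisor sums and interchanging summation writes the left side as
\[
  \frac{1}{L^4}\sum_{d,e}\mu(d)\mu(e)\,P_s(d)P_u(e)
  \sum_{[d,e]\mid m} g(m/Y)\mu^2(m)2^{\om(m)},
\]
with \(P_s(d)=(\log(R^{1-s}/d))_+^2\). I would treat this in three steps.

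\emph{Step 1: the inner sum.} Evaluate it by Mellin inversion. For square-free \(q=[d,e]\), the Dirichlet series \(\sum_{q\mid m}\mu^2(m)2^{\om(m)}m^{-w}\) equals \(h_q(w)\,\zeta(w)^2H(w)\). Here \(h_q\) is multiplicative with \(h_q(p)=2p^{-w}/(1+2p^{-w})\), and \(H\) is holomorphic and bounded in \(\Re w>1/2+\epsilon\). Shifting the contour past \(w=1\) gives the main term
\[
  Y\bigl(h_q(1)(c_1T+c_2)+h_q'(1)c_3\bigr).
\]
The remainder is \(O(Y^{1-\delta}\tau(q)^{O(1)})\). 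Since \(d,e\le R\le Y^{1/3}\), we have \([d,e]\le Y^{2/3}\), so summing this remainder over \(d,e\) gives a negligible contribution.

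\emph{Step 2: the main term as a double Mellin integral.} Write
\[
  P_s(d)=\frac{2}{2\pi i}\int_{(c)} \frac{(R^{1-s}/d)^{z}}{z^3}\,dz,
\]
and similarly for \(P_u(e)\) with variable \(z'\). The sum over \(d,e\) of \(\mu(d)\mu(e)h_{[d,e]}(1)d^{-z}e^{-z'}\) factors as
\[
  \frac{\zeta(1+z+z')^2}{\zeta(1+z)^2\,\zeta(1+z')^2}\,A(z,z'),
\]
where \(A\) is holomorphic near \(0\) and \(A(0,0)\) cancels the constant from the \(m\)-sum. I would keep the outer Mellin variable \(w\) from Step 1 as a third variable. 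This gives a triple integral in \((w,z,z')\) whose kernel contains \(\zeta(w)^2\) and, near \(w=1\), a factor \(\zeta(w+z+z')^2\) after reorganizing. Taking the residue at \(w=1\) first, as the abstract suggests, produces \(T\) and the \(\partial_w\) term.

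\emph{Step 3: evaluating the remaining double integral.} Truncate the contours to \(|z|,|z'|\ll 1/\log\) scale near the origin, using standard zero-free-region bounds for \(1/\zeta\). The integrand then becomes
\[
  \frac{(z+z')^{-2}\,z^2z'^2}{z^3z'^3}\,
  R^{(1-s)z+(1-u)z'}\,(\text{polynomial in } T,\ \partial).
\]
This is
\[
  \frac{R^{(1-s)z+(1-u)z'}}{zz'(z+z')^2}
\]
together with a companion term with one lower power of \((z+z')\) carrying \(T\). Its exact evaluation is by Laplace inversion. Write
\[
  (z+z')^{-k}=\frac{1}{\Gamma(k)}\int_0^\infty t^{k-1}e^{-t(z+z')}\,dt.
\]
The \(z\) and \(z'\) integrals then separate into one-variable inverse Laplace transforms of \(e^{az}/z^j\). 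These are truncated powers \((a-t)_+^{j-1}\), that is, derivatives of \(G_s,G_u\) in the rescaled variable \(x=1-t/L\).

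The \(t\)-integral then produces \(\int G_s'G_u'\) from the \(T\)-term (coefficient \(2\vartheta\), with normalization \(Y/\log Y\)). It produces \(\int G_s''G_u''\) weighted by a quadratic coming from \(t\) and from the \(\partial_w\) (i.e.\ \(\log Y - \log R\)) terms, giving \((1-x)(\vartheta-1+x)\). Lower-order pieces coming from Taylor coefficients of \(A\) and \(H\) contribute \(O(1/L)\) relative to the main term, i.e.\ \(O(Y/T^2)\) since \(\vartheta\asymp1\).

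\emph{Main obstacle.} The delicate point is uniformity in \(s,u\), especially when \(s=u\) or \(s-u\to0\). There the naive residue computation at \(z=-z'\) degenerates, and separate residues of \(1/(z+z')^2\) produce terms individually large that must cancel. Using the Laplace representation rather than moving contours across \(z+z'=0\) avoids this: the formula involves only the \(W^{2,2}\) data of \(G_s,G_u\), and all error terms depend continuously on \(s,u\). I would carefully check that the truncation error bounds, which rely on \(|R^{(1-s)z}|\) with \(1-s\ge1/2\), hold uniformly for \(s,u\in[0,1/2]\). I expect that verification to be the main technical step.
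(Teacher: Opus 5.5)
Your route is the paper's: a triple Mellin integral, the residue at $w=1$ taken first with the $z,z'$ contours kept to the right of $0$, localisation near the origin, a rational model, and double Laplace inversion.

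\emph{Where the difficulty actually lies.} Uniformity in $s,u$ is not it. On $\Re z=\Re z'=1/L$ one has $|R^{(1-s)z+(1-u)z'}|\le e^2$, and every bound is an absolute one. Coincident shifts are harmless because the inverse transforms of the terms $z^{-p}z'^{-q}(z+z')^{-k}$ that actually occur are continuous in $(\alpha,\beta)$.

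\emph{The gap.} The genuine gap is your claim that the Taylor corrections from $A$ and $H$ are $O(1/L)$ relative to the main term. Suppose all you know is that the holomorphic factor is $1+O(|z|+|z'|)$ near the origin. A first-order term $a_1z$ in the $T$-part then gives the density $Ta_1/\bigl(z'(z+z')^2\bigr)$. Its absolute integral over $|\Im z|,|\Im z'|\le\delta$ on $\Re z=\Re z'=1/L$ is $\asymp L^2\log L$. After the prefactor $4Y/L^4$ you get only $O(Y\log\log Y/(\log Y)^2)$. Computing this term exactly does not rescue you cheaply: its inverse transform is a multiple of $L^2\alpha\one_{\alpha<\beta}$, which jumps exactly at $s=u$, so you would need a conditionally convergent argument that is uniform near the diagonal.

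\emph{The missing idea} is the axis identity. At $z'=0$ the Euler factor $1+2p^{-w}(1-p^{-z})(1-p^{-z'})$ equals $1$, and so does the zeta quotient. Hence the whole holomorphic factor equals $\widetilde g(w)$ identically on $z'=0$, and likewise on $z=0$. This is much more than $A(0,0)H(1)=1$. It follows that the factor is $\widetilde g(w)+zz'E$, as in \eqref{eq:axis-factorization}, and all pure-$z$ and pure-$z'$ Taylor terms vanish. Each remaining error density, such as $E/\bigl((z+z')zz'\bigr)$, $TE/(z+z')^2$ or $\widetilde g'(1)/\bigl((z+z')^2zz'\bigr)$, has absolute integral $O(L^2)$. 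That is what yields $O_g(Y/(\log Y)^2)$.

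\emph{The rational model.} It is not the one you describe. The residue at $v=w-1=0$ of $e^{Tv}z^{-3}z'^{-3}\bigl\{(v+z)(v+z')/\bigl(v(v+z+z')\bigr)\bigr\}^2$ is
\[
\frac{2}{(z+z')z^2z'^2}+\frac{T}{(z+z')^2zz'}-\frac{2}{(z+z')^3zz'}.
\]
The term $\int G_s'G_u'$ comes from the first piece, which carries no $T$; it arises from differentiating the zeta quotient in $w$. The $T$-piece gives only the $\vartheta r$ part of the weight $r(\vartheta-r)=(1-x)(\vartheta-1+x)$. A model like yours, with only simple poles at $z=0$ and $z'=0$, cannot produce $\int G_s'G_u'$ at all.

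\emph{Localisation.} It must be to a fixed window $|\Im z|,|\Im z'|\le\delta$, not to the scale $1/\log R$. The rational integrand's tail beyond $|\Im z|=K/L$ is only $O(1/K)$ relative to the main term.

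\emph{Step~1 remainder.} A remainder $O(Y^{1-\delta}\tau(q)^{O(1)})$, summed over $\asymp R^2$ pairs $(d,e)$ with $R^2$ as large as $Y^{2/3}$, is not negligible unless $\delta>2/3$. You must keep the factor $q^{-\sigma}$ from the line $\Re w=\sigma=1/2+\epsilon$, which gives a total $\ll Y^{1/2+\epsilon}R^{1+\epsilon}$. Alternatively, shift the outer contour in the full triple integral, as the paper does. With the inner contours at real part $1/16$ and the new line at $\Re v=-1/20$, only $v=0$ is crossed, and $\vartheta>\alpha+\beta$ makes the shifted integral $O(Ye^{-L/40})$.
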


The main term can be written without integrals.  If
\[
  \alpha=1-s,\qquad \beta=1-u,\qquad
  \gamma=\min\{\alpha,\beta\}=1-\max\{s,u\},
\]
then
\begin{equation}\label{eq:explicit-kernel}
  \mathcal B_\vartheta(G_s,G_u)
  =
  4\vartheta\left\{
    2\alpha\beta\gamma+
    \left(\frac{\vartheta}{2}-\alpha-\beta\right)\gamma^2+
    \frac{\gamma^3}{3}
  \right\}.
\end{equation}
In the unshifted case,
\(\mathcal B_\vartheta(G_0,G_0)=2\vartheta^2+
\frac43\vartheta\).

Xi's sieve asymptotic \cite[Lemma~2.1]{XiCorr}, in the setting of
\cite{Xi2}, gives, for fixed smooth cutoff functions satisfying its
hypotheses, a
correlation formula by polarization, with an \(o(Y/\log Y)\) error.
That statement alone does not give an error uniform in cutoff
functions that vary with \(Y\). Theorem~\ref{thm:kernel} supplies
such an error for the present family, whose second derivatives have
jumps at the shift parameters.

A useful consequence is obtained by subtracting the shifted profile
from the unshifted one.  For \(0\leq t\leq1/2\), define
\(H_t=G_0-G_t\) and
\[
  W_{H_t,R}(m)
  =
  \left(
    \sum_{d\mid m}
      \mu(d)H_t\!\left(\frac{\log(R/d)}{\log R}\right)
      \one_{d\leq R}
  \right)^2.
\]
By linearity in the profile,
\[
  W_{H_t,R}(m)
  =
  \bigl(\Lambda_{0,R}(m)-\Lambda_{t,R}(m)\bigr)^2.
\]
The following result therefore follows by applying
Theorem~\ref{thm:kernel} to the shift pairs \((0,0)\), \((0,t)\), and
\((t,t)\).

\begin{corollary}
\label{cor:shifted-profile}
Uniformly for
\[
  Y^{1/5}\leq R\leq Y^{1/3},
  \qquad
  0\leq t\leq1/2,
\]
one has
\begin{equation}\label{eq:shifted-profile-asymptotic}
  \sum_m
    g\!\left(\frac mY\right)\mu^2(m)2^{\om(m)}W_{H_t,R}(m)
  =
  \frac{Y}{\log Y}\mathcal Q_\vartheta(H_t)
  +O_g\!\left(\frac{Y}{(\log Y)^2}\right),
\end{equation}
where
\begin{equation}\label{eq:shifted-profile-main-term}
  \mathcal Q_\vartheta(H_t)
  :=
  \mathcal B_\vartheta(H_t,H_t)
  =
  \vartheta\left\{
    4(\vartheta-1)t+(12-2\vartheta)t^2
    -\frac{20}{3}t^3
  \right\}.
\end{equation}
In particular,
\(\mathcal Q_\vartheta(H_t)=4\vartheta(\vartheta-1)t+O(t^2)\)
as \(t\to0\), uniformly for \(3\leq\vartheta\leq5\).
Consequently,
\begin{equation}\label{eq:shifted-profile-bound}
  \sum_m
    g\!\left(\frac mY\right)\mu^2(m)2^{\om(m)}W_{H_t,R}(m)
  \ll_g
  \left(t+\frac1{\log Y}\right)\frac{Y}{\log Y}.
\end{equation}
\end{corollary}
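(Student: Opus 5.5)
Expand the square using linearity in the profile:
\[
W_{H_t,R}(m)=\Lambda_{0,R}(m)^2-2\Lambda_{0,R}(m)\Lambda_{t,R}(m)+\Lambda_{t,R}(m)^2 .
\]
Multiply by \(g(m/Y)\mu^2(m)2^{\om(m)}\), sum over \(m\), and apply Theorem~\ref{thm:kernel} to the three shift pairs \((0,0)\), \((0,t)\) and \((t,t)\). All of these lie in \([0,1/2]^2\), so the three error terms are each \(O_g(Y/(\log Y)^2)\) uniformly in \(t\). The coincident pair \((t,t)\) is covered because the theorem allows \(s=u\). The main term is
\[
\mathcal B_\vartheta(G_0,G_0)-2\mathcal B_\vartheta(G_0,G_t)+\mathcal B_\vartheta(G_t,G_t),
\]
and by bilinearity and symmetry of \(\mathcal B_\vartheta\) on \(W^{2,2}(0,1)\) this equals \(\mathcal B_\vartheta(H_t,H_t)\). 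This proves \eqref{eq:shifted-profile-asymptotic}.

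To obtain the closed form \eqref{eq:shifted-profile-main-term}, evaluate \eqref{eq:explicit-kernel} in the three cases.
\begin{itemize}
\item For \((0,0)\), take \(\alpha=\beta=\gamma=1\). This gives \(4\vartheta\{\vartheta/2-2/3\}=2\vartheta^2+\tfrac43\vartheta\).
\item For \((0,t)\), take \(\alpha=1\) and \(\beta=\gamma=1-t\). This gives \(4\vartheta\{2\gamma^2+(\vartheta/2-1-\gamma)\gamma^2+\gamma^3/3\}\), which simplifies to \(4\vartheta\{(1+\vartheta/2)\gamma^2-\tfrac23\gamma^3\}\).
\item For \((t,t)\), take \(\alpha=\beta=\gamma=1-t\). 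This gives \(4\vartheta\{\tfrac13\gamma^3+\tfrac\vartheta2\gamma^2\}\).
\end{itemize}
Combining with weights \(1,-2,1\) yields
\[
4\vartheta\Bigl\{\tfrac\vartheta2-\tfrac23-(2+\tfrac\vartheta2)\gamma^2+\tfrac53\gamma^3\Bigr\},\qquad \gamma=1-t .
\]
Expanding in \(t\), the constant term is \(\tfrac\vartheta2-\tfrac23-2-\tfrac\vartheta2+\tfrac53=-1\). This is nonzero, so the recalled formulas need rechecking. The cleaner route is to compute \(\mathcal B_\vartheta(H_t,H_t)\) directly from \eqref{eq:bilinear-form}. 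On \((0,1)\) we have \(H_t'(x)=2x-2(x-t)_+\), which equals \(2x\) for \(x<t\) and \(2t\) for \(x>t\). We also have \(H_t''=2\one_{(0,t)}\).

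The first term is
\[
2\vartheta\Bigl(\int_0^t4x^2\,dx+4t^2(1-t)\Bigr)=2\vartheta\bigl(4t^2-\tfrac83t^3\bigr).
\]
The second term is
\[
4\vartheta\int_0^t(1-x)(\vartheta-1+x)\,dx
=4\vartheta\Bigl((\vartheta-1)t+(2-\vartheta)\tfrac{t^2}{2}-\tfrac{t^3}{3}\Bigr).
\]
Adding the two gives
\[
\vartheta\{4(\vartheta-1)t+(8+4-2\vartheta)t^2-(\tfrac{16}{3}+\tfrac43)t^3\},
\]
which is exactly \eqref{eq:shifted-profile-main-term}. (The slip above came from misapplying \eqref{eq:explicit-kernel}; the direct computation is authoritative.) In the write-up I will use the direct computation, and cross-check it against \eqref{eq:explicit-kernel} only for the \((0,0)\) value.

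The remaining claims are now easy. Since \(3\le\vartheta\le5\), the coefficients \(12-2\vartheta\) and \(20/3\) are bounded. Hence \(\mathcal Q_\vartheta(H_t)=4\vartheta(\vartheta-1)t+O(t^2)\) uniformly in \(\vartheta\). Moreover \(\mathcal Q_\vartheta(H_t)\ll t\) for \(0\le t\le 1/2\). Inserting this into \eqref{eq:shifted-profile-asymptotic} gives \eqref{eq:shifted-profile-bound}, because the error term \(Y/(\log Y)^2\) equals \((1/\log Y)\cdot Y/\log Y\).

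There is no real obstacle. The only point needing care is that the uniformity in Theorem~\ref{thm:kernel} covers coincident shifts and the endpoint \(s=0\). Everything else is a verification of the polynomial identity.
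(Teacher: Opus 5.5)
Your proof is correct and follows the paper's argument. You expand the square, apply Theorem~\ref{thm:kernel} to the pairs $(0,0)$, $(0,t)$, $(t,t)$, use bilinearity, compute $\mathcal B_\vartheta(H_t,H_t)$ directly from $H_t'$ and $H_t''=2\one_{(0,t)}$, and read off $|\mathcal Q_\vartheta(H_t)|\ll t$.

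The discrepancy in your aborted cross-check is your own slip, not a defect of \eqref{eq:explicit-kernel}. For $(0,0)$ the bracket is $2+(\frac\vartheta2-2)+\frac13=\frac\vartheta2+\frac13$; you dropped the factor $2$ in $2\alpha\beta\gamma$. With this value the constant term vanishes, and the $1,-2,1$ combination reproduces \eqref{eq:shifted-profile-main-term} exactly.
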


\subsection{Small prime factors and the proof}

Let \(p\leq R^{1/2}\) be prime. When a square-free modulus is
written as \(pn\), with \(p\nmid n\), put
\[
  t=\frac{\log p}{\log R}.
\]
Every divisor of \(pn\) is uniquely of the form \(e\) or \(pe\), with
\(e\mid n\).  Pairing the corresponding terms in the definition of
\(\Lambda_{0,R}(pn)\), and using
\[
  \mu(pe)=-\mu(e)
  \qquad\text{and}\qquad
  G_0(x-t)=G_t(x),
\]
gives the exact identity
\[
  \Lambda_{0,R}(pn)
  =
  \Lambda_{0,R}(n)-\Lambda_{t,R}(n).
\]
Corollary~\ref{cor:shifted-profile} therefore yields an estimate
for the mean square of \(\Lambda_{0,R}(pn)\), recorded as
Corollary~\ref{cor:small-prime} below. When this is applied to
moduli of size \(X\), the variable \(n\) has scale \(Y=X/p\);
the range \(Y^{1/5}\leq R\leq Y^{1/3}\) must be checked at that
scale. Part~II makes these parameter choices when summing over
small primes.

The proof of Theorem~\ref{thm:kernel} has three steps. In
Section~\ref{sec:arithmetic} we obtain a Mellin representation and
move the contour in the outer variable, keeping the two other
contours in the right half-plane. The inequality
\(\vartheta>\alpha+\beta\), where \(\alpha=1-s\) and
\(\beta=1-u\), permits a shift that crosses only the pole at
\(z=1\). In Section~\ref{sec:rational} we replace the resulting
double integral by a rational integral, with a uniform error.
The cancellation in~\eqref{eq:axis-factorization} is used in this
step. Section~\ref{sec:model-evaluation} evaluates the rational
integral and completes the proof. The corollaries are proved in
Section~\ref{sec:consequences}.

\section{Mellin inversion and contour integration}
\label{sec:arithmetic}

Write
\begin{align*}
  \widetilde g(z)&=\int_0^\infty g(x)x^{z-1}\,dx,\\
  \mathfrak C_{s,u}(Y,R)
  &=\sum_m g\!\left(\frac mY\right)\mu^2(m)2^{\om(m)}
    \Lambda_{s,R}(m)\Lambda_{u,R}(m).
\end{align*}
The function \(\widetilde g\) is entire, \(\widetilde g(1)=1\), and,
on every fixed vertical strip, \(\widetilde g\) and each of its
derivatives decay faster than any power of the imaginary part.
Throughout the proof we put
\[
  \alpha=1-s,\qquad \beta=1-u,
  \qquad \frac12\leq\alpha,\beta\leq1,
  \qquad 3\leq\vartheta\leq5.
\]
In particular, \(\vartheta-\alpha-\beta\geq1\).
All vertical contours are oriented upwards; \(\int_{(c)}\) denotes
integration on the line with real part \(c\).

\subsection{The Euler product}

\begin{lemma}\label{lem:arithmetic}
For \(c_z>1\) and \(c_1,c_2>0\), one has the absolutely convergent
representation
\begin{equation}\label{eq:triple-mellin}
\begin{aligned}
\mathfrak C_{s,u}(Y,R)
={}&\frac{4}{L^4}\frac1{(2\pi i)^3}
\int_{(c_1)}\int_{(c_2)}\int_{(c_z)}
\widetilde g(z)Y^z
\frac{\zeta(z)^2\zeta(z+w_1+w_2)^2}
     {\zeta(z+w_1)^2\zeta(z+w_2)^2}\\
&\hspace{25mm}\times
\mathfrak A(z,w_1,w_2)
\frac{e^{L(\alpha w_1+\beta w_2)}}{w_1^3w_2^3}
\,dz\,dw_2\,dw_1.
\end{aligned}
\end{equation}
The factor \(\mathfrak A\) is holomorphic in the region
\begin{equation}\label{eq:arithmetic-tube}
\begin{split}
\Omega=\bigl\{(z,w_1,w_2):{}&
\min\{\Re z,\Re(z+w_1),\Re(z+w_2),\\
&\hspace{12mm}\Re(z+w_1+w_2)\}>\tfrac12\bigr\}.
\end{split}
\end{equation}
It and all its fixed-order derivatives are bounded on each closed
region obtained by replacing \(>1/2\) with \(\geq1/2+\delta\),
where \(\delta>0\). Moreover,
\begin{equation}\label{eq:arithmetic-axes}
  \mathfrak A(z,w_1,0)=\mathfrak A(z,0,w_2)=1
\end{equation}
whenever the arguments belong to \(\Omega\).
\end{lemma}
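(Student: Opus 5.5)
We first write each divisor sum as a Mellin integral, then expand the product and compute the resulting Euler product.

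\emph{Mellin representation of the cutoff.} For \(x>0\) and \(c>0\) one has the standard formula
\[
  \frac{1}{2\pi i}\int_{(c)}\frac{x^{w}}{w^3}\,dw=\frac{(\log x)_+^2}{2}.
\]
Applying it with \(x=R^{\alpha}/d\) gives
\[
  \Lambda_{s,R}(m)=\frac{2}{L^2}\,\frac{1}{2\pi i}\int_{(c_1)}\sum_{d\mid m}\mu(d)d^{-w_1}\,\frac{e^{L\alpha w_1}}{w_1^3}\,dw_1 .
\]
The same formula, with \(\beta\) and \(w_2\), holds for \(\Lambda_{u,R}(m)\). The integrals converge absolutely because of the factor \(|w|^{-3}\), and the sum over \(d\) is finite. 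For \(m\) square-free we have \(\sum_{d\mid m}\mu(d)d^{-w}=\prod_{p\mid m}(1-p^{-w})\).

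\emph{Mellin representation of the outer sum.} We also use
\[
  g(m/Y)=\frac{1}{2\pi i}\int_{(c_z)}\widetilde g(z)Y^z m^{-z}\,dz .
\]
Inserting all three integrals, we interchange sum and integrals. This is justified by absolute convergence: for \(c_z>1\) and \(c_1,c_2>0\) we have
\[
  \sum_m \mu^2(m)\,2^{\om(m)}\,m^{-c_z}\prod_{p\mid m}\bigl(1+p^{-c_1}\bigr)\bigl(1+p^{-c_2}\bigr)<\infty ,
\]
and \(\widetilde g\) decays rapidly on vertical lines. The prefactor is \((2/L^2)^2=4/L^4\). What remains is to identify the Dirichlet series
\[
  F(z,w_1,w_2)=\sum_m\mu^2(m)2^{\om(m)}m^{-z}\prod_{p\mid m}(1-p^{-w_1})(1-p^{-w_2}).
\]

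\emph{Euler product.} \(F\) has the Euler product \(\prod_p\bigl(1+2p^{-z}(1-p^{-w_1})(1-p^{-w_2})\bigr)\). Expanding the local factor gives
\[
  1+2p^{-z}-2p^{-z-w_1}-2p^{-z-w_2}+2p^{-z-w_1-w_2}.
\]
This dictates the comparison with \(\zeta(z)^2\zeta(z+w_1+w_2)^2\zeta(z+w_1)^{-2}\zeta(z+w_2)^{-2}\), whose local factor is
\[
  (1-p^{-z})^{-2}(1-p^{-z-w_1-w_2})^{-2}(1-p^{-z-w_1})^{2}(1-p^{-z-w_2})^{2}.
\]
We define \(\mathfrak A\) as the product of the quotients of these local factors. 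Write \(a=p^{-z}\), \(b_i=p^{-w_i}\), and let \(X=|a|+|ab_1|+|ab_2|+|ab_1b_2|\). Both local factors equal \(1+2(a-ab_1-ab_2+ab_1b_2)+O(X^2)\), so their quotient is \(1+E_p\) with \(E_p\) a holomorphic function bounded by \(O(X^2)\). On the region where each of the four exponents \(\Re z,\Re(z+w_i),\Re(z+w_1+w_2)\) is at least \(1/2+\delta\), we have \(X^2\ll p^{-1-2\delta}\). Hence \(\sum_p E_p\) converges uniformly there, and \(\mathfrak A\) is holomorphic and bounded.

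For the finitely many small primes, where \(X\) need not be small, we need the local factors to be nonvanishing; we check this directly. Since \(|p^{-\sigma}|\le 2^{-1/2}\) for every relevant exponent \(\sigma\), the factors \((1-p^{-\sigma})^{\pm2}\) are nonvanishing. For the numerator \(1+2a(1-b_1)(1-b_2)\), we handle the small primes by absorbing them into the bound after possible further care. This step needs checking: for the boundedness claim we only need nonvanishing of the zeta-quotient factor in the denominator of \(\mathfrak A\), and that holds. Boundedness of the derivatives follows from Cauchy's estimates applied on slightly larger regions, with \(\delta\) replaced by \(\delta/2\).

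\emph{Axis identity.} Setting \(w_2=0\) kills the local factor: \(1-b_2=0\), so the \(F\)-factor is \(1\). The zeta factor becomes \(\zeta(z)^2\zeta(z+w_1)^2/(\zeta(z+w_1)^2\zeta(z)^2)=1\). Therefore \(\mathfrak A(z,w_1,0)=1\) identically, and by symmetry \(\mathfrak A(z,0,w_2)=1\).

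The main obstacle is the bookkeeping of the quadratic error \(E_p\) uniformly in the four-exponent tube, so that holomorphy extends to all of \(\Omega\) and not merely to the region of absolute convergence. This is handled by the bound \(E_p=O(X^2)\) on each closed region with \(1/2+\delta\), as above.
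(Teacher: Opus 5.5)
Your proposal is correct and follows essentially the same route as the paper: Mellin inversion for the quadratic cutoff and for $g$, the Euler product, the local comparison $\mathfrak A_p=1+O(Q^2)$ giving normal convergence on each $\delta$-shrunk tube, Cauchy's estimates for the derivatives, and the local identity $\mathfrak A_p=1$ when $w_1=0$ or $w_2=0$. You can replace the hedged small-prime paragraph with the observation the paper uses. The factor $1+2p^{-z}(1-p^{-w_1})(1-p^{-w_2})$ sits in the numerator of $\mathfrak A_p$, so its zeros are harmless, and the only denominator $(1-p^{-z-w_1})^2(1-p^{-z-w_2})^2$ is bounded away from zero because $|p^{-z-w_j}|\le 2^{-1/2}$.
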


\begin{proof}
The identity
\[
  \frac{(\log X)_+^2}{2}
  =\frac1{2\pi i}\int_{(c)}\frac{X^w}{w^3}\,dw
  \qquad(X>0,\ c>0)
\]
gives
\[
\Lambda_{s,R}(m)
=\frac{2}{L^2}\frac1{2\pi i}\int_{(c)}
  \frac{e^{\alpha Lw}}{w^3}
  \prod_{p\mid m}(1-p^{-w})\,dw.
\]
After Mellin inversion of \(g\), the resulting Dirichlet series is
\begin{equation}\label{eq:direct-euler}
\begin{aligned}
\mathcal F(z,w_1,w_2)
&=\sum_{m\geq1}\frac{\mu^2(m)2^{\om(m)}}{m^z}
  \prod_{p\mid m}(1-p^{-w_1})(1-p^{-w_2})\\
&=\prod_p\{1+2p^{-z}(1-p^{-w_1})(1-p^{-w_2})\}.
\end{aligned}
\end{equation}
For \(\Re z=c_z>1\) and \(\Re w_j=c_j>0\), the absolute
Dirichlet series is bounded, uniformly in all imaginary parts, by
\[
\prod_p\{1+2p^{-c_z}(1+p^{-c_1})(1+p^{-c_2})\}<\infty.
\]
Together with the factors \(w_1^{-3}w_2^{-3}\) and the rapid decay of
\(\widetilde g\), this justifies the interchanges of sums and
integrals.

Set
\[
q_0=p^{-z},\qquad q_1=p^{-z-w_1},\qquad
q_2=p^{-z-w_2},\qquad q_{12}=p^{-z-w_1-w_2}.
\]
Extracting the zeta-factors in~\eqref{eq:triple-mellin} from
\eqref{eq:direct-euler} gives \(\mathfrak A=\prod_p\mathfrak A_p\),
where
\begin{equation}\label{eq:arithmetic-local}
\mathfrak A_p
=\frac{(1-q_0)^2(1-q_{12})^2}{(1-q_1)^2(1-q_2)^2}
  \{1+2(q_0-q_1-q_2+q_{12})\}.
\end{equation}
Put \(Q=\max\{|q_0|,|q_1|,|q_2|,|q_{12}|\}\). Near the origin,
\[
\frac{(1-q_0)^2(1-q_{12})^2}{(1-q_1)^2(1-q_2)^2}
=1-2(q_0-q_1-q_2+q_{12})+O(Q^2).
\]
Multiplication by the last factor in~\eqref{eq:arithmetic-local}
therefore gives \(\mathfrak A_p=1+O(Q^2)\).
On the closed region with margin \(\delta\), all four
\(q\)-variables have modulus at most \(p^{-1/2-\delta}\), and
the denominators are bounded away from zero. Consequently,
\[
\mathfrak A_p-1\ll_\delta p^{-1-2\delta}.
\]
The Euler product converges normally in the region~\eqref{eq:arithmetic-tube}
and is bounded
on every such closed region. Cauchy's estimates on polydiscs of radius
\(\delta/12\) give the asserted derivative bounds. The identity
initially obtained in the region of absolute convergence therefore
continues meromorphically to \(\Omega\).

In~\eqref{eq:arithmetic-local}, if \(w_2=0\), then \(q_2=q_0\)
and \(q_{12}=q_1\), so that
\(\mathfrak A_p=1\). The same argument applies when \(w_1=0\).
This proves~\eqref{eq:arithmetic-axes}.
\end{proof}

For \(v=z-1\), let
\[
Z(v)=v\zeta(1+v),\qquad Z(0)=1,
\]
and define, in a neighbourhood of the origin,
\begin{equation}\label{eq:analytic-factor}
\mathcal A_0(v,w_1,w_2)
=\widetilde g(1+v)\mathfrak A(1+v,w_1,w_2)
\frac{Z(v)^2Z(v+w_1+w_2)^2}{Z(v+w_1)^2Z(v+w_2)^2}.
\end{equation}
The function \(Z\) is holomorphic and nonzero near zero.
Consequently the function~\eqref{eq:analytic-factor} is holomorphic
on a fixed polydisc.
By~\eqref{eq:arithmetic-axes}, it equals \(\widetilde g(1+v)\)
whenever \(w_1=0\) or \(w_2=0\). Applying the fundamental theorem
of calculus in each of these variables gives
\begin{equation}\label{eq:axis-factorization}
\mathcal A_0(v,w_1,w_2)
=\widetilde g(1+v)+w_1w_2 E(v,w_1,w_2),
\end{equation}
where
\[
E(v,w_1,w_2)
=\int_0^1\!\int_0^1
 \bigl(\partial_{w_1}\partial_{w_2}\mathcal A_0\bigr)
 (v,tw_1,rw_2)\,dt\,dr.
\]
Thus \(E\) is holomorphic; on a smaller fixed polydisc it and
its fixed-order derivatives are bounded in terms of \(g\).
Finally, near the origin, the zeta-factors and \(\widetilde g\mathfrak A\)
in~\eqref{eq:triple-mellin} are exactly
\begin{equation}\label{eq:local-kernel}
\mathcal A_0(v,w_1,w_2)
\frac{(v+w_1)^2(v+w_2)^2}{v^2(v+w_1+w_2)^2}.
\end{equation}

\subsection{The contour shift}

It suffices to consider sufficiently large \(L\).

Put \(W=w_1+w_2\), and define
\begin{equation}\label{eq:residue-B}
\begin{aligned}
  \mathcal H(v;w_1,w_2)
  :={}&\widetilde g(1+v)\mathfrak A(1+v,w_1,w_2)\\
  &\times
  \frac{\zeta(1+v+W)^2}
       {\zeta(1+v+w_1)^2\zeta(1+v+w_2)^2}.
\end{aligned}
\end{equation}
Let \(\gamma_0\) denote Euler's constant, and set
\begin{equation}\label{eq:residue-density}
  \mathcal R_L(w_1,w_2)
  :=\frac{(\vartheta L+2\gamma_0)\mathcal H(0;w_1,w_2)
             +\partial_v\mathcal H(0;w_1,w_2)}
            {w_1^3w_2^3}.
\end{equation}

\begin{lemma}\label{lem:single-residue}
With \(c=L^{-1}\), uniformly in the stated parameter ranges,
\begin{equation}\label{eq:single-residue}
\begin{aligned}
  \mathfrak C_{s,u}(Y,R)
  ={}&\frac{4Y}{L^4}\frac1{(2\pi i)^2}
  \int_{(c)}\int_{(c)}
  e^{L(\alpha w_1+\beta w_2)}
  \mathcal R_L(w_1,w_2)\,dw_1\,dw_2\\
  &+O_g\!\left(YL^{-4}e^{-L/40}\right).
\end{aligned}
\end{equation}
The double integral is absolutely convergent.
\end{lemma}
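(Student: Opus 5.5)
We start from the representation \eqref{eq:triple-mellin} of Lemma~\ref{lem:arithmetic}. We take \(c_1=c_2=c=L^{-1}\) and \(c_z=1+c\), and shift the \(z\)-contour to the left, to a line \(\Re z=1-\eta\) with \(\eta\) a fixed small constant (say \(\eta=1/20\)). The integrand has \(\zeta(z)^2\) in the numerator, so it has a double pole at \(z=1\).

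\textbf{Poles crossed.} For \(w_1,w_2\) on the lines \(\Re w_j=c\), the remaining factors are
\[
\zeta(z+W)^2/\bigl(\zeta(z+w_1)^2\zeta(z+w_2)^2\bigr).
\]
The numerator \(\zeta(z+W)^2\) has its pole at \(z=1-W\), where \(\Re(1-W)=1-2c\). This lies inside the strip swept out, so a second pole is crossed. There are also possible zeros of \(\zeta(z+w_j)\) near the line \(1\), which we avoid by choosing \(\eta\) inside a zero-free region.

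We handle the pole at \(z=1-W\) by reordering the shift. Before moving \(z\), we first move \(w_1\) and \(w_2\) slightly to the right, to \(\Re w_j=\eta\). This costs only a factor \(e^{L(\alpha+\beta)\eta}\) in the size of the integrand. After this, \(\Re(1-W)=1-2\eta\), which lies to the left of the line \(1-\eta\), so only the pole at \(z=1\) is crossed. Afterwards we move the \(w\)-contours back to \(\Re w_j=c\) in the residue term, which is legitimate since \(\mathcal R_L\) is regular away from \(w_j=0\).

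\textbf{The residue.} The residue at \(z=1\) of
\[
\widetilde g(z)Y^z\zeta(z)^2\,\mathcal H(z-1;w_1,w_2)
\]
is computed from the Laurent expansion \(\zeta(1+v)^2=v^{-2}+2\gamma_0v^{-1}+\cdots\) and \(Y^{1+v}=Y(1+Tv+\cdots)\) with \(T=\vartheta L\). It equals
\[
Y\bigl[(\vartheta L+2\gamma_0)\mathcal H(0)+\partial_v\mathcal H(0)\bigr].
\]
Dividing by \(w_1^3w_2^3\), this is exactly \(Y\mathcal R_L\), which gives the main term in \eqref{eq:single-residue}.

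\textbf{The remaining integral and the main obstacle.} The integral on \(\Re z=1-\eta\) is bounded using three facts:
\begin{itemize}
\item \(|Y^z|=Y^{1-\eta}=Ye^{-\eta\vartheta L}\);
\item \(|e^{L(\alpha w_1+\beta w_2)}|\leq e^{2\eta L}\);
\item the rapid decay of \(\widetilde g\), polynomial bounds for \(\zeta\) and \(1/\zeta\) near the \(1\)-line (via a standard zero-free region), and \(|w_j|^{-3}\) integrability.
\end{itemize}
Together these give a bound \(Y e^{-\eta(\vartheta-\alpha-\beta)L}\) times polynomial factors. The key inequality is \(\vartheta-\alpha-\beta\geq1\), which leaves a saving \(e^{-\eta L}\). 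This absorbs the \(L^{4}\) and the logarithmic factors and yields \(O(YL^{-4}e^{-L/40})\).

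The main obstacle is controlling \(1/\zeta(z+w_j)\) when \(\Re(z+w_j)\) is slightly below or near \(1\) with large imaginary part. No fixed \(\eta\) lies inside a zero-free region uniformly in height, so a straight vertical line is not admissible. I would first try to make the real parts satisfy \(\Re(z+w_j)=1-\eta+\eta=1\) exactly. On the line \(\Re=1\), \(1/\zeta\) is bounded by \(\log\) of the imaginary part, so the bound holds without any zero-free-region contour. This arrangement also keeps \((z,w_1,w_2)\) inside \(\Omega\), so \(\mathfrak A\) is bounded. Absolute convergence of the final double integral follows from \(|w_1w_2|^{-3}\) and the boundedness of \(\mathcal H\) and \(\partial_v\mathcal H\) on \(\Re w_j=c\), up to \(\log\) factors coming from \(1/\zeta\) on the \(1\)-line.
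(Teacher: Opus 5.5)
Your proposal is correct and is essentially the paper's argument. The paper also puts the \(w_j\)-contours at a fixed abscissa (\(a=1/16\)) before shifting \(v=z-1\) to \(\Re v=-1/20\), so that \(v=-W\) is not crossed and only the double pole at \(v=0\) is picked up; it uses \(\vartheta-\alpha-\beta\geq1\) for the exponential saving and then moves the \(w_j\) back to \(c=L^{-1}\). The paper differs from you only in taking \(a\) strictly larger than the \(z\)-shift, which keeps \(\Re(z+w_j)\geq1+1/80\) so that the zeta quotient is bounded by absolute convergence; your choice \(a=b=\eta\) instead lands on the \(1\)-line and needs the classical bound \(1/\zeta(\sigma+it)\ll\log(|t|+2)\) for \(\sigma\geq1\). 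Also, \(\mathcal H\) already contains \(\widetilde g(1+v)\), so drop the extra \(\widetilde g(z)\) in your residue expression; the residue value you state is correct.
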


\begin{proof}
In~\eqref{eq:triple-mellin}, take \(c_z=2\) and
\(c_1=c_2=a=1/16\). Move the contour of \(v=z-1\) to
\(\Re v=-b\), where \(b=1/20\), keeping the \(w_1\)- and \(w_2\)-contours
fixed. Throughout this deformation,
\[
  \Re(1+v+w_j)\geq1+a-b=1+\frac1{80},\qquad
  \Re(1+v+W)\geq1+2a-b=1+\frac3{40}.
\]
The zeta quotient in~\eqref{eq:residue-B} is therefore uniformly
bounded.  The arithmetic factor is bounded by
Lemma~\ref{lem:arithmetic}, since all four real parts defining its
region are at least \(1-b>1/2\).  The pole \(v=-W\) has real part
\(-2a<-b\), and hence the only pole crossed is the double pole at \(v=0\).

The Mellin transform \(\widetilde g(1+v)\) decays faster than any
power of \(\lvert\Im v\rvert\), uniformly on the fixed strip
\(-b\leq\Re v\leq1\).  Together with polynomial growth of zeta
on a fixed vertical strip, this bounds the integrals over the
horizontal \(v\)-segments by a quantity tending to zero, times
\(\lvert w_1w_2\rvert^{-3}\).  Since
\(\int_{\mathbb R}\lvert a+it\rvert^{-3}\,dt<\infty\), the
residue theorem may be integrated over both \(w_1\)- and \(w_2\)-contours.
The integral on the new \(v\)-line is
\[
  \ll_g\frac{Y}{L^4}
  e^{L\{-b\vartheta+a(\alpha+\beta)\}}
  \ll_g\frac{Y}{L^4}e^{-L/40},
\]
because \(b\vartheta-a(\alpha+\beta)\geq3/20-1/8=1/40\).
The expansion
\(\zeta(1+v)^2=v^{-2}+2\gamma_0v^{-1}+O(1)\)
shows that
\[
\mathop{\rm Res}_{v=0}
 e^{\vartheta Lv}\zeta(1+v)^2\mathcal H(v;w_1,w_2)
=(\vartheta L+2\gamma_0)\mathcal H(0;w_1,w_2)
 +\partial_v\mathcal H(0;w_1,w_2).
\]
After division by \(w_1^3w_2^3\), this is
\eqref{eq:residue-density}.

Finally, move the contours in \(w_1\) and \(w_2\) from \(a\) to \(c\), one at a
time.  The functions \(\mathcal H(0;w_1,w_2)\) and
\(\partial_v\mathcal H(0;w_1,w_2)\) are holomorphic when
\(\Re w_1,\Re w_2>0\).  On the strips involved in these moves,
absolute convergence of the zeta and reciprocal-zeta Dirichlet
series, and of their first derivatives, gives the crude bound
\[
  \mathcal H(0;w_1,w_2),\quad
  \partial_v\mathcal H(0;w_1,w_2)\ll_g L^C
\]
for an absolute \(C\).  Thus \(\mathcal R_L\ll_g
L^{C+1}\lvert w_1w_2\rvert^{-3}\).  For each fixed \(L\), this
bound proves absolute convergence and makes the horizontal edges
tend to zero.  No pole is crossed, proving
\eqref{eq:single-residue}.
\end{proof}

\section{Approximation of the double integral}\label{sec:rational}

We first bound the part of~\eqref{eq:single-residue} away from the
origin, and then use the exact analytic factorization
\eqref{eq:axis-factorization} on a fixed neighborhood.  Throughout
this section, write
\[
  w_1=c+it_1,\qquad w_2=c+it_2,\qquad
  W=2c+i(t_1+t_2),\qquad c=L^{-1}.
\]
Fix a sufficiently small constant \(\delta>0\) such that
\(\lvert t_1\rvert,\lvert t_2\rvert\leq\delta\) lies in the
neighborhood of~\eqref{eq:axis-factorization} for all sufficiently
large \(L\).  Denote this square by \(D_\delta\).

\begin{lemma}\label{lem:residue-tail}
Uniformly in the stated parameter ranges,
\begin{equation}\label{eq:residue-tail}
  \iint_{\mathbb R^2\setminus D_\delta}
  \bigl|\mathcal R_L(c+it_1,c+it_2)\bigr|
  \,dt_1\,dt_2\ll_g L^2.
\end{equation}
\end{lemma}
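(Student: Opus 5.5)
The plan is to bound \(\mathcal H(0;w_1,w_2)\) and \(\partial_v\mathcal H(0;w_1,w_2)\) pointwise on the lines \(\Re w_1=\Re w_2=c=L^{-1}\), and then integrate against \(|w_1w_2|^{-3}\). Since \(|\mathcal R_L|\ll L\,(|\mathcal H|+|\partial_v\mathcal H|)/|w_1w_2|^3\), the target \(L^2\) amounts to showing that the double integral of \((|\mathcal H|+|\partial_v\mathcal H|)|w_1w_2|^{-3}\) over the complement of \(D_\delta\) is \(O(L)\).

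\emph{Pointwise bounds.} By Lemma~\ref{lem:arithmetic}, \(\mathfrak A(1,w_1,w_2)\) and its \(z\)-derivative are bounded when \(\Re w_j\geq0\), because all four real parts are at least \(1>1/2\). The factor \(\widetilde g(1+v)\) and its derivative at \(v=0\) are constants. What remains is the zeta quotient
\[
\frac{\zeta(1+W)^2}{\zeta(1+w_1)^2\zeta(1+w_2)^2}
\]
and its logarithmic derivative in \(v\). I will use the standard facts near the line \(\Re=1\):
\begin{itemize}
\item \(1/\zeta(1+w)\ll\min\{|w|,\log(2+|\Im w|)\}\) for \(\Re w\geq0\), by the classical zero-free region with the bound \(1/\zeta(1+it)\ll\log|t|\);
\item \(\zeta(1+W)\ll\min\{1/|W|,\log(2+|\Im W|)\}\);
\item \((\zeta'/\zeta)(1+w)\ll\min\{1/|w|,\log(2+|\Im w|)\}\).
\end{itemize}
Since \(\Re W=2c\), one has \(|W|\geq 2/L\), so \(|\zeta(1+W)|\ll L\). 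This gives
\[
|\mathcal H(0;w_1,w_2)|\ll \min\{|W|^{-2},L^2,\log^2(2+|t_1+t_2|)\}\,|w_1|^2|w_2|^2\log^{C}(2+|t_1|+|t_2|),
\]
where the \(|w_j|^2\) factor is available for small \(|w_j|\) and the log factor for large \(|w_j|\). For \(\partial_v\mathcal H\), the quantity \(\partial_v\mathcal H/\mathcal H\) is a combination of \(\zeta'/\zeta\) values, so it costs an extra factor \(\ll L+\log(2+|t_1|+|t_2|)\). I will keep the extra \(L\) only where \(|W|\) is small.

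\emph{Integration.} On the complement of \(D_\delta\), at least one of \(|t_1|,|t_2|\) exceeds \(\delta\); by symmetry take \(|t_1|>\delta\). Then \(|w_1|^{-3}\cdot|w_1|^{2}\log^C\) is integrable over \(|t_1|>\delta\). In \(t_2\), the factor \(|w_2|^{-3}|w_2|^2=|w_2|^{-1}\) is not integrable near \(0\) uniformly, giving only \(\int_{|t_2|\leq1}dt_2/|c+it_2|\ll\log L\). Away from \(0\), the factor \(|w_2|^{-3}\) decays, and I use \(|1/\zeta(1+w_2)|^2\ll\log^2\). The dangerous locus is \(W\approx0\), i.e.\ \(t_2\approx -t_1\) with \(|t_2|>\delta\). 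There \(|\zeta(1+W)|^2\ll\min\{L^2,|t_1+t_2|^{-2}\}\), and integrating over \(t_2\) near \(-t_1\) gives \(\int\min\{L^2,x^{-2}\}\,dx\ll L\). Both \(|w_1|,|w_2|\gg\delta\) there, so the weight \(|w_1w_2|^{-3}\) together with the reciprocal-zeta logs is integrable in \(t_1\). The \(\partial_v\mathcal H\) term contributes an extra \(\zeta'/\zeta(1+W)\ll 1/|W|\). This turns \(\int\min\{L^2,x^{-2}\}\) into \(\int\min\{L^3,|x|^{-3}\}\,dx\ll L^2\), and there is no further factor of \(L\) in front, since the \(\vartheta L\) prefactor multiplies only \(\mathcal H\). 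Overall the contribution is \(O(L\cdot L)+O(L^2)=O(L^2)\), with the \(\log L\) pieces dominated.

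The main obstacle is the diagonal \(t_1+t_2\approx0\), where \(\zeta(1+W)^2\) reaches size \(L^2\). The delicate bookkeeping is to confirm that the \(\partial_v\) term, through \(\zeta'/\zeta(1+W)\), and the \(\vartheta L\) multiplier never combine to produce \(L^3\). I will handle this by splitting \(\mathcal R_L\) into its two terms, as above, and integrating the \(\min\{\cdot\}\) kernels in the variable \(x=t_1+t_2\) first.
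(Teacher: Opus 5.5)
Your proposal is correct and takes essentially the paper's approach. Zero-free-region bounds give $|\zeta(1+w_j)|^{-2}|w_j|^{-3}\ll\min\{|w_j|^{-1},|w_j|^{-3}\}$ up to logarithms, and the near-diagonal region $t_1+t_2\approx0$ (where $|W|\geq 2/L$) produces $\int|W|^{-2}\ll L$ against the $\vartheta L$ prefactor and $\int|W|^{-3}\ll L^2$ for the $\partial_v$ term; this is exactly what the paper's majorant in $p_c,q_c$ encodes.

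There are three small slips, none of which affects the conclusion. First, your bounds for $\zeta(1+W)$, $(\zeta'/\zeta)(1+w)$ and $\mathcal H$ need a sum, such as $|W|^{-1}+\log(2+|\Im W|)$, rather than a minimum, since the minimum is false near $0$. Second, $(\zeta'/\zeta)(1+w_j)$ is also of size $L$ near $t_j=0$ off the diagonal; these are the paper's $q_c$ terms, and they contribute $O(L)$, so you cannot keep the extra $L$ only where $|W|$ is small. Third, your opening target of $O(L)$ for the integral involving $|\partial_v\mathcal H|$ is too strong. Your uniform factor $L+\log(2+|t_1|+|t_2|)$ and your final split into $O(L\cdot L)+O(L^2)$ already absorb all three points.
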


\begin{proof}
We use the classical zero-free-region estimates only on the right
of the line one.  They and Cauchy's formula imply, for
\(0\leq\Re\lambda\leq1/4\),
\begin{equation}\label{eq:residue-zeta-bounds}
\begin{aligned}
  \frac1{\zeta(1+\lambda)}
  &\ll \min\{1,\lvert\lambda\rvert\}
       \log^C(3+\lvert\Im\lambda\rvert),\\
  \left(\frac1\zeta\right)'(1+\lambda)
  &\ll\log^C(3+\lvert\Im\lambda\rvert),\\
  \zeta(1+\lambda)
  &\ll\log(3+\lvert\Im\lambda\rvert)+\lvert\lambda\rvert^{-1},\\
  \zeta'(1+\lambda)
  &\ll\log^2(3+\lvert\Im\lambda\rvert)+\lvert\lambda\rvert^{-2}.
\end{aligned}
\end{equation}
At \(\lambda=0\), \(1/\zeta(1+\lambda)\) is understood by
holomorphic continuation. The last two inequalities are asserted
for \(\lambda\neq0\).
For completeness, away from a fixed neighborhood of zero, these
follow from the usual bounds in a zero-free strip of width
\(\gg1/\log(3+\lvert\Im\lambda\rvert)\); differentiation on
discs of a smaller comparable radius costs only a further power of
the logarithm.  Near zero the Laurent expansion supplies the
displayed powers of \(\lvert\lambda\rvert\).  One may use, for
example, the zero-free-region estimates in
\cite[Lemma~3.2]{Baluyot}.

Put \(h_c(t)=(c^2+t^2)^{1/2}\), and define
\[
  p_c(t)=\frac1{h_c(t)(1+\lvert t\rvert)^{3/2}},\qquad
  q_c(t)=\frac1{h_c(t)^2(1+\lvert t\rvert)^{1/2}}.
\]
The estimates~\eqref{eq:residue-zeta-bounds}, together with the
bounded arithmetic factor and its first derivatives, give
\begin{equation}\label{eq:residue-majorant}
\begin{aligned}
  |\mathcal R_L(w_1,w_2)|\ll_g{}&
  Lp_c(t_1)p_c(t_2)(1+|W|^{-2})\\
  &+p_c(t_1)p_c(t_2)(1+|W|^{-3})\\
  &+\{q_c(t_1)p_c(t_2)+p_c(t_1)q_c(t_2)\}
       (1+|W|^{-2}).
\end{aligned}
\end{equation}
Indeed, the undifferentiated reciprocal-zeta factor divided by
\(|w_j|^3\) contributes
\[
  \frac{|1/\zeta(1+w_j)|^2}{|w_j|^3}
  \ll \frac{\min\{1,|w_j|\}^2}{|w_j|^3}
      \log^C(3+|t_j|).
\]
Up to the logarithmic factor, this is
\(O(h_c(t_j)^{-1}(1+|t_j|)^{-2})\). Differentiating a reciprocal
zeta-function replaces this bound by
\(h_c(t_j)^{-2}(1+|t_j|)^{-1}\), again up to logarithms.
Differentiating \(\zeta(1+W)^2\) gives
\(O(\log^C(3+|t_1+t_2|)(1+|W|^{-3}))\).
To absorb these logarithms, put \(\ell(t)=\log(3+|t|)\) and use
\[
 \ell(t_1+t_2)\ll\ell(t_1)\ell(t_2),\qquad
 \ell(t)^K\ll_K(1+|t|)^{1/2}
\]
for each fixed \(K\). After collecting the powers of
\(\ell(t_1)\) and \(\ell(t_2)\), the second inequality accounts
for the half-power in the definitions of \(p_c,q_c\). This proves
\eqref{eq:residue-majorant} with constants independent of \(L\).

The elementary one-dimensional estimates
\begin{equation}\label{eq:weight-integrals}
\begin{aligned}
  \int_{\mathbb R}p_c(t)\,dt&\ll\log(2L),&
  \int_{\mathbb R}q_c(t)\,dt&\ll L,\\
  \int_{|t|>\delta}p_c(t)\,dt&\ll_\delta1,&
  \int_{|t|>\delta}q_c(t)\,dt&\ll_\delta1
\end{aligned}
\end{equation}
follow by splitting the integrals at \(|t|=c\) and \(|t|=1\).
To integrate~\eqref{eq:residue-majorant} over the complement of
\(D_\delta\), put \(\sigma=t_1+t_2\).
If \(|\sigma|\leq\delta/2\), both \(t_1\) and \(t_2\) have modulus at least \(\delta/2\).  Consequently, for each such
\(\sigma\), the integral in \(t_1\) of each of the products
\(p_cp_c\), \(q_cp_c\), and \(p_cq_c\), restricted to the
complement of \(D_\delta\), is \(O_\delta(1)\).  This follows
directly from boundedness away from zero and their integrable
decay as \(|t_1|\to\infty\).  Now
\[
  \int_{\mathbb R}|2c+i\sigma|^{-2}\,d\sigma\ll L,
  \qquad
  \int_{\mathbb R}|2c+i\sigma|^{-3}\,d\sigma\ll L^2.
\]
The three lines of~\eqref{eq:residue-majorant} contribute
\(O_\delta(L^2)\), \(O_\delta(L^2)\), and \(O_\delta(L)\),
respectively.

On \(|\sigma|>\delta/2\), all the displayed powers of
\(|W|^{-1}\) are bounded.  Since at least one of
\(|t_1|,|t_2|\) exceeds \(\delta\),
\eqref{eq:weight-integrals} bounds the three lines by
\(O_\delta(L\log(2L))\), \(O_\delta(\log(2L))\), and
\(O_\delta(L)\), after integration.  This proves
\eqref{eq:residue-tail}.
\end{proof}

We approximate \(\mathcal R_L\) by the rational function
\begin{equation}\label{eq:rational-density}
  \mathcal M_L(w_1,w_2)
  :=\frac{2}{Ww_1^2w_2^2}
     +\frac{\vartheta L}{W^2w_1w_2}
     -\frac{2}{W^3w_1w_2}.
\end{equation}

\begin{lemma}\label{lem:local-rational-approximation}
Uniformly in the stated parameter ranges,
\begin{equation}\label{eq:local-rational-approximation}
  \iint_{D_\delta}
  |\mathcal R_L(w_1,w_2)-\mathcal M_L(w_1,w_2)|
  \,dt_1\,dt_2\ll_g L^2.
\end{equation}
\end{lemma}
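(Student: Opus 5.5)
On \(D_\delta\) I would write \(\mathcal R_L\) exactly in terms of the local factor \(\mathcal A_0\). Then I would use the factorization \eqref{eq:axis-factorization} to peel off \(\mathcal M_L\), and bound what is left by elementary integrals of negative powers of \(|w_1|,|w_2|,|W|\).

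\emph{Step 1: express \(\mathcal H\) through \(\mathcal A_0\).} Comparing \eqref{eq:residue-B} with \eqref{eq:analytic-factor} and \eqref{eq:local-kernel}, and using \(\zeta(1+v)^2=Z(v)^2/v^2\), I expect
\[
  \mathcal H(v;w_1,w_2)=\mathcal A_0(v,w_1,w_2)\,
  \frac{(v+w_1)^2(v+w_2)^2}{Z(v)^2(v+W)^2}
\]
on the fixed polydisc. Differentiate at \(v=0\), using \(Z(0)=1\) and \(Z'(0)=\gamma_0\) (which follows from \(\zeta(1+v)=v^{-1}+\gamma_0+O(v)\)). This gives
\[
  \partial_v\mathcal H(0)=\frac{w_1^2w_2^2}{W^2}\Bigl\{\partial_v\mathcal A_0
  +\mathcal A_0\Bigl(\frac{2W}{w_1w_2}-\frac2W-2\gamma_0\Bigr)\Bigr\},
\]
where \(\mathcal A_0\) and \(\partial_v\mathcal A_0\) are evaluated at \((0,w_1,w_2)\). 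The \(-2\gamma_0\) term cancels the \(2\gamma_0\mathcal H(0)\) term in \eqref{eq:residue-density}. Dividing by \(w_1^3w_2^3\) then yields
\[
  \mathcal R_L=\mathcal A_0(0,w_1,w_2)\,\mathcal M_L(w_1,w_2)
  +\frac{\partial_v\mathcal A_0(0,w_1,w_2)}{W^2w_1w_2}.
\]

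\emph{Step 2: insert the factorization.} By \eqref{eq:axis-factorization}, \(\mathcal A_0(0,w_1,w_2)=1+w_1w_2E\), since \(\widetilde g(1)=1\). Also \(\partial_v\mathcal A_0(0,w_1,w_2)=\widetilde g'(1)+w_1w_2\,\partial_vE\), with \(E\) and \(\partial_vE\) bounded on \(D_\delta\). Hence \(\mathcal R_L-\mathcal M_L\) is bounded by a constant depending on \(g\) times
\[
  \frac{L}{|W|^2}+\frac{1}{|W||w_1||w_2|}+\frac1{|W|^3}
  +\frac1{|W|^2}+\frac{1}{|W|^2|w_1||w_2|}.
\]
Each term in \(\mathcal M_L\) except \(2/(Ww_1^2w_2^2)\) loses one factor of \(w_1w_2\) when multiplied by \(w_1w_2E\). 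The term \(2/(Ww_1^2w_2^2)\) loses both, leaving \(1/(Ww_1w_2)\).

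\emph{Step 3: integrate over \(D_\delta\).} Rescale \(t_j=c\tau_j\), with \(c=L^{-1}\) and \(|\tau_j|\leq\delta L\). Then \(|w_j|=c|1+i\tau_j|\) and \(|W|=c|2+i\sigma|\), where \(\sigma=\tau_1+\tau_2\). The bounds are as follows.
\begin{itemize}
\item \(L/|W|^2\) and \(1/|W|^3\) each give \(O(L^2)\): integrate in \(\sigma\), with the other variable ranging over a set of length \(O(\delta)\).
\item \(1/|W|^2\) gives \(O(L)\).
\item \(1/(|W||w_1||w_2|)\) gives \(L\) times \(\iint_{|\tau_j|\leq\delta L}|2+i\sigma|^{-1}|1+i\tau_1|^{-1}|1+i\tau_2|^{-1}\,d\tau\ll\log^2L\), hence \(O(L\log^2L)\).
\item The critical term is \(1/(|W|^2|w_1||w_2|)\), which carries the constant \(\widetilde g'(1)\), since \(E\) does not absorb it. It gives \(L^2\) times
\[
  \iint_{\mathbb R^2}\frac{d\tau_1\,d\tau_2}{|2+i\sigma|^2|1+i\tau_1||1+i\tau_2|}.
\]
\end{itemize}
I would show this last integral is finite by changing variables to \((\tau_1,\sigma)\). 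The inner integral \(\int|1+i\tau_1|^{-1}|1+i(\sigma-\tau_1)|^{-1}\,d\tau_1\) is \(\ll\log(2+|\sigma|)/(1+|\sigma|)\), and this is integrable against \(|2+i\sigma|^{-2}\).

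The main obstacle is the bookkeeping in Step 1. One must get the expression for \(\partial_v\mathcal H(0)\) exactly right, so that the \(\gamma_0\) terms cancel and the leading part is precisely \(\mathcal M_L\); otherwise a term of size \(L^3\) would survive. After that, the only delicate estimate is the convergent double integral for the \(\widetilde g'(1)\) term, which is borderline: it needs \(|W|^{-2}\) rather than \(|W|^{-1}\), and it has exactly that.
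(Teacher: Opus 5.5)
Your proposal is correct and is essentially the paper's proof: you reach the same exact identity $\mathcal R_L=\mathcal A_0\mathcal M_L+\partial_v\mathcal A_0/(W^2w_1w_2)$ at $v=0$, the same five-term error density after inserting \eqref{eq:axis-factorization}, and the same integral bounds over $D_\delta$. The differences are cosmetic. The paper gets that identity by expanding the kernel with $(v+w_1)(v+w_2)=v(v+W)+w_1w_2$ and taking the residue directly, so $\gamma_0$ never appears. It also bounds the inner integral of the critical $\widetilde g'(1)$ term by $\pi$ using Cauchy--Schwarz, rather than by your logarithmic convolution estimate.
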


\begin{proof}
Since
\[
 (v+w_1)(v+w_2)=v(v+W)+w_1w_2,
\]
the rational factor in~\eqref{eq:local-kernel} satisfies
\begin{align*}
  \frac1{w_1^3w_2^3}
  \left\{\frac{(v+w_1)(v+w_2)}{v(v+W)}\right\}^{\!2}
  &{}=\frac1{w_1^3w_2^3}
   +\frac{2}{v(v+W)w_1^2w_2^2}\\
  &\quad+\frac{1}{v^2(v+W)^2w_1w_2}.
\end{align*}
Taking its residue after multiplication by
\(e^{\vartheta Lv}\mathcal A_0(v,w_1,w_2)\), with \(w_1,w_2\)
fixed, gives the exact identity
\begin{equation}\label{eq:local-residue}
  \mathcal R_L(w_1,w_2)
  =\left[
    \frac{2\mathcal A_0}{Ww_1^2w_2^2}
    +\frac{\vartheta L\mathcal A_0+\partial_v\mathcal A_0}
          {W^2w_1w_2}
    -\frac{2\mathcal A_0}{W^3w_1w_2}
    \right]_{v=0}.
\end{equation}
By~\eqref{eq:axis-factorization},
\begin{align*}
  \mathcal A_0(0,w_1,w_2)&=1+w_1w_2E(0,w_1,w_2),\\
  \partial_v\mathcal A_0(0,w_1,w_2)
   &=\widetilde g'(1)+w_1w_2\partial_vE(0,w_1,w_2),
\end{align*}
where \(E\) and \(\partial_vE\) are bounded on \(D_\delta\).
Subtracting~\eqref{eq:rational-density} from
\eqref{eq:local-residue}, and evaluating \(E,\partial_vE\) at
\((0,w_1,w_2)\), leaves
\begin{equation}\label{eq:analytic-error-density}
  \frac{2E}{Ww_1w_2}
  +\frac{\vartheta LE}{W^2}
  -\frac{2E}{W^3}
  +\frac{\widetilde g'(1)}{W^2w_1w_2}
  +\frac{\partial_vE}{W^2}.
\end{equation}

For fixed \(\sigma=t_1+t_2\), the permitted values of \(t_1\)
in \(D_\delta\) form an interval of length at most \(2\delta\).
Hence
\[
  \iint_{D_\delta}|W|^{-2}\,dt_1\,dt_2\ll_\delta L,
  \qquad
  \iint_{D_\delta}|W|^{-3}\,dt_1\,dt_2\ll_\delta L^2.
\]
We also need
\begin{equation}\label{eq:critical-local-integral}
  \iint_{\mathbb R^2}
  \frac{dt_1\,dt_2}{|W|^2|w_1w_2|}\ll L^2.
\end{equation}
To prove it, scale \(t_1=cx\), \(t_2=cy\).  Apart from the
factor \(c^{-2}\), the resulting integral is
\[
  \iint_{\mathbb R^2}
  \frac{dx\,dy}
       {\sqrt{1+x^2}\sqrt{1+y^2}\{4+(x+y)^2\}}.
\]
For each \(\sigma=x+y\), Cauchy--Schwarz bounds the inner
integral in \(x\) by
\(\int_{\mathbb R}(1+x^2)^{-1}\,dx=\pi\).
Since
\(\int_{\mathbb R}(4+\sigma^2)^{-1}\,d\sigma=\pi/2\), the
left-hand side of~\eqref{eq:critical-local-integral} is at most
\(\pi^2L^2/2\). Moreover, \(|W|\) is bounded on \(D_\delta\),
so the same estimate gives
\[
 \iint_{D_\delta}\frac{dt_1\,dt_2}{|Ww_1w_2|}
 \ll_\delta L^2.
\]
These bounds control each term in~\eqref{eq:analytic-error-density}
by
\(O_g(L^2)\), proving~\eqref{eq:local-rational-approximation}.
\end{proof}

\begin{lemma}\label{lem:rational-tail}
The integral of \(|\mathcal M_L(w_1,w_2)|\) over
\(\mathbb R^2\) is finite, and
\begin{equation}\label{eq:rational-tail}
  \iint_{\mathbb R^2\setminus D_\delta}
  |\mathcal M_L(w_1,w_2)|\,dt_1\,dt_2\ll L^2.
\end{equation}
\end{lemma}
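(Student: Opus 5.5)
The plan is to bound each of the three terms of \(\mathcal M_L\) in~\eqref{eq:rational-density} separately, using the same two tools as before: the change of variables \(\sigma=t_1+t_2\), and the one-dimensional weights \(|w_j|^{-1}=h_c(t_j)^{-1}\). On the line \(\Re w_j=c\) we have \(|w_j|\geq\max\{c,|t_j|\}\) and \(|W|\geq\max\{2c,|\sigma|\}\). Finiteness over \(\mathbb R^2\) follows from the same estimates with fixed \(L\), after checking decay at infinity.

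\emph{Second and third terms.} Here \(|\vartheta L|\,|W|^{-2}|w_1w_2|^{-1}\) and \(2|W|^{-3}|w_1w_2|^{-1}\) are the relevant densities. For the whole plane, \eqref{eq:critical-local-integral} already gives \(\iint|W|^{-2}|w_1w_2|^{-1}\ll L^2\). That only yields \(L^3\) for the \(\vartheta L\) term, so on the complement of \(D_\delta\) we need one factor of \(L\) saved.

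On the complement, one of \(|t_j|\) exceeds \(\delta\), say \(|t_1|>\delta\), so \(|w_1|^{-1}\ll_\delta(1+|t_1|)^{-1}\). We split according to \(\sigma\).

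If \(|\sigma|\leq\delta/2\), then \(|t_2|\geq\delta/2\) as well, so \(|w_1w_2|^{-1}\ll(1+|t_1|)^{-2}\) after substituting \(t_2=\sigma-t_1\). The \(t_1\)-integral is then \(O(1)\), and \(\int|W|^{-2}d\sigma\ll L\). This gives \(O(L^2)\) for the \(L\)-term. For the \(W^{-3}\) term, \(\int|W|^{-3}d\sigma\ll L^2\), which again gives \(O(L^2)\).

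If \(|\sigma|>\delta/2\), then \(|W|^{-2}\ll(1+|\sigma|)^{-2}\). We bound \(\int(1+|t_1|)^{-1}h_c(\sigma-t_1)^{-1}dt_1\ll\log(2L)\log(2+|\sigma|)\) by splitting at \(|t_2|\leq1\) and \(|t_2|>1\). Integrating against \((1+|\sigma|)^{-2}\) gives \(O(L\log L)\) for the \(L\)-term, and similarly \(O(\log L)\) for the cubic term. Both are \(\ll L^2\).

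\emph{First term.} The density is \(2|W|^{-1}|w_1|^{-2}|w_2|^{-2}\). With \(|t_1|>\delta\) we have \(|w_1|^{-2}\ll(1+|t_1|)^{-2}\), while \(\int|w_2|^{-2}dt_2\ll L\).

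If \(|\sigma|>\delta/2\), then \(|W|^{-1}\) is bounded and the integral is \(\ll L\).

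If \(|\sigma|\leq\delta/2\), then \(|t_2|\geq\delta/2\) too, so \(|w_1w_2|^{-2}\) is integrable in \(t_1\) uniformly. It remains to control \(\int_{|\sigma|\le\delta/2}|W|^{-1}d\sigma\ll\log L\).

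Both regions give \(\ll L\).

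\emph{Global finiteness.} For fixed \(L\) the integrand has no singularities, because \(c>0\). Near infinity, each term decays like a product making it integrable. Concretely, the pointwise majorants above — \((1+|t_1|)^{-1}(1+|t_2|)^{-1}(1+|\sigma|)^{-2}\) for the \(W^{-2}\) and \(W^{-3}\) terms, and \((1+|t_1|)^{-2}(1+|t_2|)^{-2}\) times a bounded factor for the \(W^{-1}\) term — are integrable over \(\mathbb R^2\). Inside \(D_\delta\) everything is bounded by a constant depending on \(L\).

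\emph{Main obstacle.} The delicate step is the \(\vartheta L\,W^{-2}(w_1w_2)^{-1}\) term on the complement, since the crude global bound \eqref{eq:critical-local-integral} loses a factor \(L\). The saving comes exactly from using that \(|t_1|\) or \(|t_2|\) is at least \(\delta\), which converts one \(h_c^{-1}\) into a bounded, decaying weight.
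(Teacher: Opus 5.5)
Your proof is correct and follows the paper's route: you use the same split at $|\sigma|=\delta/2$, and you handle the range $|\sigma|\le\delta/2$ exactly as the paper does (both $|t_j|\ge\delta/2$, so the $t_1$-integral is $O_\delta(1)$, followed by $\int|W|^{-k}\,d\sigma$). The only difference is on $|\sigma|>\delta/2$: you keep $|t_1|>\delta$ to replace one $h_c(t_1)^{-1}$ by $(1+|t_1|)^{-1}$, whereas the paper drops the restriction and uses the convolution estimates \eqref{eq:rational-convolutions}; both give the same $O(L)$, $O(L\log L)$, $O(\log L)$ bounds for the three terms.
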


\begin{proof}
Again put \(\sigma=t_1+t_2\).  On the part with
\(|\sigma|\leq\delta/2\), both imaginary parts are bounded away
from zero.  Uniformly in such \(\sigma\), the integrals in \(t_1\)
of \(|w_1w_2|^{-1}\) and \(|w_1w_2|^{-2}\), restricted to the
complement of \(D_\delta\), are \(O_\delta(1)\).
Integration in \(\sigma\) then bounds the three terms
of~\eqref{eq:rational-density} by
\(O_\delta(\log(2L))\), \(O_\delta(L^2)\), and
\(O_\delta(L^2)\), respectively.

On \(|\sigma|>\delta/2\), we may drop the restriction on
\(\max(|t_1|,|t_2|)\).  The elementary convolution estimates
\begin{equation}\label{eq:rational-convolutions}
\begin{aligned}
  \int_{\mathbb R}
  \frac{dt}{h_c(t)h_c(\sigma-t)}
  &\ll\frac{\log(2+|\sigma|/c)}{|\sigma|+c},\\
  \int_{\mathbb R}
  \frac{dt}{(c^2+t^2)\{c^2+(\sigma-t)^2\}}
  &=\frac{2\pi}{c(\sigma^2+4c^2)}
\end{aligned}
\end{equation}
apply.  For the first estimate, if \(|\sigma|\leq c\), use
Cauchy--Schwarz to obtain \(\pi/c\).  Otherwise, split the line
into the regions \(|t|\leq|\sigma|/2\),
\(|\sigma-t|\leq|\sigma|/2\), and their complement.  On the
first two regions one factor is \(O(|\sigma|^{-1})\), and the
integral of the other is \(O(\log(2+|\sigma|/c))\).
On the complement both factors are bounded away from their
singularities, and their product has integral
\(O(|\sigma|^{-1})\).  The second identity follows by partial
fractions and integration of \((c^2+t^2)^{-1}\).

Since \(|W|=(4c^2+\sigma^2)^{1/2}\),
\eqref{eq:rational-convolutions} bounds the integrals of the three
terms on \(|\sigma|>\delta/2\), respectively, by
\[
  O_\delta(c^{-1}),\qquad
  O_\delta\!\left(L\log(2/c)\right),\qquad
  O_\delta\!\left(\log(2/c)\right).
\]
This proves~\eqref{eq:rational-tail}.  The integrand is bounded on
each compact set in \((t_1,t_2)\), since the real parts of
\(w_1,w_2,W\) are positive.  Its full integral is therefore
absolutely convergent.
\end{proof}

With \(\mathcal M_1\) denoting~\eqref{eq:rational-density} at
\(L=1\), define
\begin{equation}\label{eq:rational-model}
  \mathcal J_\vartheta(\alpha,\beta)
  :=\frac1{(2\pi i)^2}
  \int_{(1)}\int_{(1)}e^{\alpha z_1+\beta z_2}
  \mathcal M_1(z_1,z_2)\,dz_1\,dz_2.
\end{equation}
Its absolute convergence follows by scaling any of the absolutely
convergent rational integrals in Lemma~\ref{lem:rational-tail}.

\begin{proposition}\label{prop:rational-reduction}
Uniformly for \(Y^{1/5}\leq R\leq Y^{1/3}\) and
\(s,u\in[0,1/2]\),
\begin{equation}\label{eq:rational-reduction}
  \mathfrak C_{s,u}(Y,R)
  =\frac{4Y}{L}\mathcal J_\vartheta(\alpha,\beta)
   +O_g\!\left(\frac{Y}{L^2}\right).
\end{equation}
\end{proposition}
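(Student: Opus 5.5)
The plan is to start from Lemma~\ref{lem:single-residue}, which writes \(\mathfrak C_{s,u}(Y,R)\) as \(\frac{4Y}{L^4}\) times the double integral of \(e^{L(\alpha w_1+\beta w_2)}\mathcal R_L\) over \(\Re w_1=\Re w_2=c=L^{-1}\), plus an error \(O_g(YL^{-4}e^{-L/40})\). That error is far smaller than \(Y/L^2\). On these lines \(|e^{L(\alpha w_1+\beta w_2)}|=e^{\alpha+\beta}\le e^2\), so the exponential factor is bounded in modulus and can be dropped when estimating absolute integrals. Since \(dw_j=i\,dt_j\), the measure is \(|dw_1\,dw_2|=dt_1\,dt_2\).

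The first step is to replace \(\mathcal R_L\) by \(\mathcal M_L\). We split \(\mathbb R^2\) into \(D_\delta\) and its complement.
\begin{itemize}
\item On the complement, Lemma~\ref{lem:residue-tail} bounds the integral of \(|\mathcal R_L|\) by \(O_g(L^2)\), and Lemma~\ref{lem:rational-tail} does the same for \(|\mathcal M_L|\).
\item On \(D_\delta\), Lemma~\ref{lem:local-rational-approximation} bounds the integral of \(|\mathcal R_L-\mathcal M_L|\) by \(O_g(L^2)\).
\end{itemize}
Together these give
\[
\mathfrak C_{s,u}(Y,R)=\frac{4Y}{L^4}\frac{1}{(2\pi i)^2}\int_{(c)}\int_{(c)}e^{L(\alpha w_1+\beta w_2)}\mathcal M_L(w_1,w_2)\,dw_1\,dw_2+O_g\!\left(\frac{Y}{L^2}\right).
\]
Here the \(\mathcal M_L\)-integral over all of \(\mathbb R^2\) converges absolutely by Lemma~\ref{lem:rational-tail}, which justifies recombining the pieces.

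The second step is to rescale. Put \(w_j=z_j/L\), so the lines \(\Re w_j=1/L\) become \(\Re z_j=1\) and \(dw_1\,dw_2=L^{-2}dz_1\,dz_2\). Each term of \(\mathcal M_L\) is homogeneous of total degree \(-5\) in \((w_1,w_2)\), once the factor \(\vartheta L\) is counted as the \(L\)-dependence. Explicitly:
\begin{itemize}
\item \(2/(Ww_1^2w_2^2)\) becomes \(L^5\cdot 2/(Zz_1^2z_2^2)\), where \(Z=z_1+z_2\);
\item \(\vartheta L/(W^2w_1w_2)\) becomes \(L\cdot L^4\,\vartheta/(Z^2z_1z_2)\);
\item \(2/(W^3w_1w_2)\) becomes \(L^5\cdot 2/(Z^3z_1z_2)\).
\end{itemize}
Hence \(\mathcal M_L(z_1/L,z_2/L)=L^5\mathcal M_1(z_1,z_2)\), where \(\mathcal M_1\) keeps the coefficient \(\vartheta\), as in the definition of \(\mathcal J_\vartheta\). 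The exponential becomes \(e^{\alpha z_1+\beta z_2}\). The main term is therefore
\[
\frac{4Y}{L^4}\cdot L^{-2}\cdot L^5\,\mathcal J_\vartheta(\alpha,\beta)=\frac{4Y}{L}\,\mathcal J_\vartheta(\alpha,\beta),
\]
which is \eqref{eq:rational-reduction}.

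The only point needing care is uniformity. The constants in Lemmas~\ref{lem:residue-tail}--\ref{lem:rational-tail} are stated uniformly in \(\vartheta\in[3,5]\) and \(\alpha,\beta\in[1/2,1]\). The exponential factor is bounded by \(e^2\) for all such \(\alpha,\beta\). So every error is \(O_g(Y/L^2)\) with a constant independent of \(s\), \(u\) and \(R\). Finally, \(T\asymp L\) in this range, so the error may equally be written as \(O_g(Y/(\log Y)^2)\). There is no real obstacle: the proposition is bookkeeping on top of the three preceding lemmas, and the one step to check carefully is the exact homogeneity of \(\mathcal M_L\) under the scaling.
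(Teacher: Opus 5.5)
Your proposal is correct and follows the paper's proof step for step. You bound \(|e^{L(\alpha w_1+\beta w_2)}|\leq e^2\) on the lines \(\Re w_j=L^{-1}\), use Lemmas~\ref{lem:residue-tail}, \ref{lem:local-rational-approximation} and~\ref{lem:rational-tail} to replace \(\mathcal R_L\) by \(\mathcal M_L\) at cost \(O_g(L^2)\), and rescale by \(z_j=Lw_j\) via \(\mathcal M_L(z_1/L,z_2/L)=L^5\mathcal M_1(z_1,z_2)\) to get \(L^3\mathcal J_\vartheta(\alpha,\beta)\), with your term-by-term scaling check (including the extra \(L\) in the middle term) supplying the detail the paper leaves implicit.
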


\begin{proof}
On both contours supplied by Lemma~\ref{lem:single-residue},
\(|e^{L(\alpha w_1+\beta w_2)}|=e^{\alpha+\beta}\leq e^2\).
Lemma~\ref{lem:residue-tail} restricts that integral to
\(D_\delta\), Lemma~\ref{lem:local-rational-approximation}
replaces \(\mathcal R_L\) by \(\mathcal M_L\), and
Lemma~\ref{lem:rational-tail} extends the rational integral back
to the complete lines.  Each step costs \(O_g(L^2)\) before
multiplication by \(4Y/L^4\).  Finally, the change of variables
\(z_j=Lw_j\) gives
\[
 \mathcal M_L(z_1/L,z_2/L)=L^5\mathcal M_1(z_1,z_2),
 \qquad dw_1\,dw_2=L^{-2}\,dz_1\,dz_2.
\]
Thus the full rational integral is
\(L^3\mathcal J_\vartheta(\alpha,\beta)\).  Combining these
estimates with~\eqref{eq:single-residue} proves
\eqref{eq:rational-reduction}.
\end{proof}

\section{Evaluation of the main term}
\label{sec:model-evaluation}

\begin{lemma}\label{lem:model-evaluation}
For \(\gamma=\min\{\alpha,\beta\}\), one has
\begin{equation}\label{eq:evaluated-model}
\begin{aligned}
\mathcal J_\vartheta(\alpha,\beta)
&=2\int_0^\gamma(\alpha-r)(\beta-r)\,dr
  +\int_0^\gamma r(\vartheta-r)\,dr\\
&=2\alpha\beta\gamma
  +\left(\frac\vartheta2-\alpha-\beta\right)\gamma^2
  +\frac{\gamma^3}{3}.
\end{aligned}
\end{equation}
\end{lemma}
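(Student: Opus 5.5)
The plan is to evaluate \(\mathcal J_\vartheta(\alpha,\beta)\) by Laplace inversion. Each negative power of \(W=z_1+z_2\) will be turned into an integral over an auxiliary variable \(r\geq0\), after which the \(z_1\)- and \(z_2\)-integrals decouple. Concretely, for \(\Re W>0\) and \(k=1,2,3\) I will use
\[
  \frac1{W^k}=\frac1{(k-1)!}\int_0^\infty r^{k-1}e^{-rW}\,dr ,
\]
together with the elementary inversion formula
\[
  \frac1{2\pi i}\int_{(1)}\frac{e^{xz}}{z^j}\,dz=\frac{x_+^{\,j-1}}{(j-1)!},
  \qquad j=1,2,
\]
where for \(j=1\) the value at \(x=0\) is irrelevant because it is a null set in \(r\). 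Substituting these into \(\mathcal M_1\) turns the exponential factor into \(e^{(\alpha-r)z_1+(\beta-r)z_2}\), so each term of \eqref{eq:rational-density} (at \(L=1\)) becomes an \(r\)-integral of a product of two one-variable inverse Laplace transforms.

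Carrying this out term by term, I expect:
\[
  \frac{2}{Wz_1^2z_2^2}\ \longmapsto\ 2\int_0^\infty(\alpha-r)_+(\beta-r)_+\,dr
  =2\int_0^\gamma(\alpha-r)(\beta-r)\,dr,
\]
\[
  \frac{\vartheta}{W^2z_1z_2}\ \longmapsto\ \vartheta\int_0^\infty r\,\one_{r<\alpha}\one_{r<\beta}\,dr=\frac{\vartheta\gamma^2}{2},
\]
\[
  -\frac{2}{W^3z_1z_2}\ \longmapsto\ -\int_0^\gamma r^2\,dr=-\frac{\gamma^3}{3}.
\]
The last two terms combine to \(\int_0^\gamma r(\vartheta-r)\,dr\), which gives the first line of \eqref{eq:evaluated-model}. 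Expanding \(2\int_0^\gamma(\alpha-r)(\beta-r)\,dr=2\alpha\beta\gamma-(\alpha+\beta)\gamma^2+\tfrac23\gamma^3\) and adding \(\tfrac{\vartheta}{2}\gamma^2-\tfrac13\gamma^3\) then gives the second line.

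The main obstacle is justifying the interchange of the \(r\)-integral with the \(z_1,z_2\)-integrals. The terms with \(1/(z_1z_2)\) are not absolutely integrable on their own in \((t_1,t_2)\) once the \(W\)-factor has been removed, so Fubini cannot be applied directly. I will first insert a damping factor \(e^{\varepsilon(z_1^2+z_2^2)}\) with \(\varepsilon>0\); on \(\Re z_j=1\) it has modulus \(e^{\varepsilon(2-t_1^2-t_2^2)}\leq e^{2\varepsilon}\) and tends to \(1\) pointwise. Since the undamped integral \eqref{eq:rational-model} converges absolutely (Lemma~\ref{lem:rational-tail}), dominated convergence shows that the damped integral tends to \(\mathcal J_\vartheta(\alpha,\beta)\) as \(\varepsilon\to0\).

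For fixed \(\varepsilon\), everything is absolutely integrable in \((r,t_1,t_2)\), because \(\int_0^\infty r^{k-1}|e^{-rW}|\,dr=(k-1)!\,2^{-k}\) on \(\Re W=2\). Hence Fubini applies. The inner integrals then become Gaussian-smoothed versions \(\Phi_{j,\varepsilon}(x)\) of \(x_+^{j-1}/(j-1)!\). These are uniformly bounded on compact sets, converge almost everywhere, and are \(O(e^{-c x^2/\varepsilon})\) for \(x<0\), which controls large \(r\). A second application of dominated convergence in \(r\) then yields the formulas above.
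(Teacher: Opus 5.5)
Your proof is correct, and its core identity is the paper's. Writing \(W^{-k}=\frac1{(k-1)!}\int_0^\infty r^{k-1}e^{-rW}\,dr\) and splitting the exponential as \(e^{(\alpha-r)z_1+(\beta-r)z_2}\) is the paper's change of variables \(a=r+x\), \(b=r+y\) read backwards. Your three \(r\)-integrals are exactly \(2f_{2,2,1}(\alpha,\beta)\), \(\vartheta f_{1,1,2}(\alpha,\beta)\) and \(-2f_{1,1,3}(\alpha,\beta)\).

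What differs is the direction of the analytic justification. The paper works forward. It computes the double Laplace transform of the explicit continuous functions \(f_{p,q,k}\), which is an absolutely convergent calculation needing no regularization. It then checks that each transform \(z_1^{-p}z_2^{-q}(z_1+z_2)^{-k}\) is integrable on \(\Re z_1=\Re z_2=1\), and recovers \(f_{p,q,k}\) pointwise from the Fourier inversion theorem. Continuity of \(f_{p,q,k}\) takes care of the diagonal \(\alpha=\beta\).

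You work from the Bromwich side. There, as you correctly observe, the \(1/(z_1z_2)\) terms are no longer absolutely integrable once \(W^{-k}\) has been expanded. Your damping factor \(e^{\varepsilon(z_1^2+z_2^2)}\) handles this legitimately: it makes Fubini valid and turns the one-variable inversions into heat-kernel smoothings of \(x_+^{j-1}/(j-1)!\). Two dominated-convergence passages then finish the job. The coincident case is covered because pointwise convergence fails only at \(r\in\{\alpha,\beta\}\).

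Your route avoids quoting a two-variable inversion theorem, at the cost of the regularization and the tail bounds for \(\Phi_{j,\varepsilon}\). The paper's route is shorter because it delegates that limiting process to the inversion theorem.
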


\begin{proof}
We evaluate~\eqref{eq:rational-model} by double Laplace inversion.
For positive integers \(p,q,k\) and \(a,b\geq0\), put
\[
f_{p,q,k}(a,b)
=\int_0^{\min(a,b)}
\frac{r^{k-1}(a-r)^{p-1}(b-r)^{q-1}}
{(k-1)!(p-1)!(q-1)!}\,dr.
\]
For \(\Re z_1,\Re z_2>0\), change variables to
\(a=r+x\), \(b=r+y\). The double Laplace transform then factors
as
\[
\begin{aligned}
&\left(\int_0^\infty e^{-r(z_1+z_2)}
       \frac{r^{k-1}}{(k-1)!}\,dr\right)
 \left(\int_0^\infty e^{-xz_1}
       \frac{x^{p-1}}{(p-1)!}\,dx\right)\\
&\hspace{35mm}\times
 \left(\int_0^\infty e^{-yz_2}
       \frac{y^{q-1}}{(q-1)!}\,dy\right).
\end{aligned}
\]
Evaluating the three gamma integrals gives
\begin{equation}\label{eq:double-laplace}
\int_0^\infty\int_0^\infty
e^{-az_1-bz_2}f_{p,q,k}(a,b)\,da\,db
=\frac1{z_1^p z_2^q(z_1+z_2)^k}.
\end{equation}
The interchange is justified by absolute convergence, as is seen by
replacing \(z_j\) with \(\Re z_j\).

For the three triples \((p,q,k)=(2,2,1),(1,1,2),(1,1,3)\), the
transform in~\eqref{eq:double-laplace} is integrable on
\(\Re z_1=\Re z_2=1\). For the first triple this follows from
\(|z_1+z_2|\geq2\) and the integrability of \(|z_j|^{-2}\).
For the other two, write \(\sigma=\Im z_1+\Im z_2\) and use
\[
\int_{\mathbb R}
\frac{dt}{\sqrt{1+t^2}\sqrt{1+(\sigma-t)^2}}\leq\pi,
\]
followed by integration of \((4+\sigma^2)^{-k/2}\).
The functions \(f_{p,q,k}\), extended by zero outside the positive
quadrant, are continuous in these three cases, and
\(e^{-a-b}f_{p,q,k}(a,b)\) is integrable. Fourier inversion applied
to~\eqref{eq:double-laplace} therefore gives the corresponding
Bromwich integrals pointwise. In particular, the inversion remains
valid when \(a=b\).

The three terms in~\eqref{eq:rational-model} consequently give
\[
\mathcal J_\vartheta(\alpha,\beta)
=2f_{2,2,1}(\alpha,\beta)
 +\vartheta f_{1,1,2}(\alpha,\beta)
 -2f_{1,1,3}(\alpha,\beta).
\]
This is the first equality in~\eqref{eq:evaluated-model}; elementary
integration gives the second.
\end{proof}

\begin{proof}[Proof of Theorem~\ref{thm:kernel}]
Since \(G_s'(x)=2(x-s)_+\) and
\(G_s''(x)=2\one_{(s,1)}(x)\) almost everywhere, the substitution
\(r=1-x\) in~\eqref{eq:bilinear-form} yields
\begin{align*}
\mathcal B_\vartheta(G_s,G_u)
&=4\vartheta\left\{
 2\int_0^\gamma(\alpha-r)(\beta-r)\,dr
 +\int_0^\gamma r(\vartheta-r)\,dr\right\}\\
&=4\vartheta\mathcal J_\vartheta(\alpha,\beta).
\end{align*}
Proposition~\ref{prop:rational-reduction}, together with
\(\log Y=\vartheta L\) and \(3\leq\vartheta\leq5\), proves the
asserted asymptotic. Formula~\eqref{eq:explicit-kernel} follows from
Lemma~\ref{lem:model-evaluation}.
\end{proof}

\section{Differences of shifted weights and small prime factors}
\label{sec:consequences}

\begin{proof}[Proof of Corollary~\ref{cor:shifted-profile}]
Expand
\[
 W_{H_t,R}
 =\Lambda_{0,R}^2-2\Lambda_{0,R}\Lambda_{t,R}
  +\Lambda_{t,R}^2
\]
and apply Theorem~\ref{thm:kernel} to the three products. By
bilinearity,
\[
 \mathcal B_\vartheta(G_0,G_0)
 -2\mathcal B_\vartheta(G_0,G_t)
 +\mathcal B_\vartheta(G_t,G_t)
 =\mathcal B_\vartheta(H_t,H_t).
\]
The three uniform errors are still \(O_g(Y/(\log Y)^2)\), which
proves~\eqref{eq:shifted-profile-asymptotic}.

The piecewise form of the difference profile is
\[
  H_t(x)
  =
  \begin{cases}
    0, & x\leq0,\\
    x^2, & 0<x\leq t,\\
    2tx-t^2, & x>t.
  \end{cases}
\]
Almost everywhere on \((0,1)\),
\[
  H_t'(x)
  =
  \begin{cases}
    2x, & 0<x<t,\\
    2t, & t<x<1,
  \end{cases}
  \qquad
  H_t''(x)=2\,\one_{(0,t)}(x).
\]
Consequently,
\[
\begin{aligned}
  \int_0^1\bigl(H_t'(x)\bigr)^2\,dx
  &=
  \int_0^t4x^2\,dx+\int_t^14t^2\,dx\\
  &=
  4t^2-\frac83t^3,
\end{aligned}
\]
whereas
\[
\begin{aligned}
  &\int_0^1
    \bigl(H_t''(x)\bigr)^2
    (1-x)(\vartheta-1+x)\,dx\\
  &\qquad=
  4\int_0^t
    \bigl\{(\vartheta-1)+(2-\vartheta)x-x^2\bigr\}\,dx\\
  &\qquad=
  4(\vartheta-1)t
  +2(2-\vartheta)t^2
  -\frac43t^3.
\end{aligned}
\]
Substitution into~\eqref{eq:bilinear-form} gives
\[
  \mathcal Q_\vartheta(H_t)
  =
  \vartheta\left\{
    4(\vartheta-1)t+(12-2\vartheta)t^2
    -\frac{20}{3}t^3
  \right\},
\]
which is~\eqref{eq:shifted-profile-main-term}. The weights in
\(\mathcal B_\vartheta(H_t,H_t)\) are nonnegative for
\(3\leq\vartheta\leq5\). The displayed polynomial therefore
gives
\[
 0\leq\mathcal Q_\vartheta(H_t)\ll t
 \qquad(0\leq t\leq1/2),
\]
uniformly in \(\vartheta\). Combining this with
\eqref{eq:shifted-profile-asymptotic} proves
\eqref{eq:shifted-profile-bound}.
\end{proof}

\subsection{Integers divisible by a small prime}

The divisor-pairing identity from the introduction gives the
following estimate.

\begin{corollary}
\label{cor:small-prime}
Uniformly for \(Y^{1/5}\leq R\leq Y^{1/3}\) and primes
\(p\leq R^{1/2}\), one has
\[
\begin{aligned}
  &\sum_{\substack{n\geq1\\p\nmid n}}
    g\!\left(\frac nY\right)\mu^2(n)2^{\om(n)}
    \Lambda_{0,R}(pn)^2\\
  &\qquad\ll_g
  \left(\frac{\log p}{\log R}+\frac1{\log Y}\right)
  \frac{Y}{\log Y}
  \ll_g
  \frac{(1+\log p)Y}{(\log Y)^2}.
\end{aligned}
\]
\end{corollary}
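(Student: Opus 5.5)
The plan is to reduce Corollary~\ref{cor:small-prime} to the bound \eqref{eq:shifted-profile-bound} of Corollary~\ref{cor:shifted-profile} via the divisor-pairing identity from the introduction. Fix a prime \(p\leq R^{1/2}\) and put \(t=\log p/\log R\in(0,1/2]\). For every \(n\) with \(p\nmid n\), the identity
\[
  \Lambda_{0,R}(pn)=\Lambda_{0,R}(n)-\Lambda_{t,R}(n)
\]
holds exactly. Hence \(\Lambda_{0,R}(pn)^2=W_{H_t,R}(n)\). This is all we need; we only use \(p\nmid n\) to apply the identity, and the sum over \(n\) is not otherwise changed.

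The next step is positivity. Every summand \(g(n/Y)\mu^2(n)2^{\om(n)}W_{H_t,R}(n)\) is nonnegative, because \(g\geq0\) and \(W_{H_t,R}\) is a square. So the sum over \(n\) with \(p\nmid n\) is at most the full sum over all \(n\geq1\). By \eqref{eq:shifted-profile-bound}, applied with the same \(Y,R\) (so \(Y^{1/5}\leq R\leq Y^{1/3}\) holds at scale \(Y\), as assumed), the full sum is
\[
  \ll_g \left(t+\frac1{\log Y}\right)\frac{Y}{\log Y}.
\]
The implied constant does not depend on \(t\), hence not on \(p\). This gives the first inequality.

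For the second inequality, use \(\log R=L=\log Y/\vartheta\geq\log Y/5\). This gives \(t\leq5\log p/\log Y\), and therefore
\[
  t+\frac1{\log Y}\leq\frac{5\log p+1}{\log Y}\ll\frac{1+\log p}{\log Y}.
\]
Multiplying by \(Y/\log Y\) gives \((1+\log p)Y/(\log Y)^2\).

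There is no real obstacle: all the uniformity was already established in Theorem~\ref{thm:kernel} and Corollary~\ref{cor:shifted-profile}. The only points to check are that \(t\in[0,1/2]\), which follows from \(p\leq R^{1/2}\), and that dropping the condition \(p\nmid n\) is legitimate, which follows from nonnegativity of the summands.
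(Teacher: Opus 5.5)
Your proposal is correct and matches the paper's proof. The pairing identity gives \(\Lambda_{0,R}(pn)^2=W_{H_t,R}(n)\) for \(p\nmid n\) with \(t=\log p/\log R\in(0,1/2]\). Nonnegativity of the summands lets you drop the condition \(p\nmid n\) and apply \eqref{eq:shifted-profile-bound}, and \(\log R\geq\log Y/5\) gives the second bound, exactly as in the paper.
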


\begin{proof}
Put \(t_p=\log p/\log R\), so \(0\leq t_p\leq1/2\).  If
\(p\nmid n\) and \(x_e=\log(R/e)/\log R\), pairing the divisors
\(e\) and \(pe\) of \(pn\) and using \(\mu(pe)=-\mu(e)\) gives
\[
\begin{aligned}
  \Lambda_{0,R}(pn)
  &=\sum_{e\mid n}\mu(e)
    \bigl\{G_0(x_e)-G_0(x_e-t_p)\bigr\}\\
  &=\sum_{e\mid n}\mu(e)H_{t_p}(x_e).
\end{aligned}
\]
Here the positive-part convention makes the cutoff indicators
redundant.  Hence
\[
  \Lambda_{0,R}(pn)^2=W_{H_{t_p},R}(n).
\]
All summands on the left of~\eqref{eq:shifted-profile-bound} are
nonnegative.  Restricting that sum to \(p\nmid n\) and using the
preceding identity proves the first bound.  The second follows from
\(\log R\asymp\log Y\).
\end{proof}

For an application to moduli \(pn\) of size \(X\), the preceding
corollary gives
\[
 \sum_{\substack{n\geq1\\p\nmid n}}
 g\!\left(\frac{pn}{X}\right)\mu^2(n)2^{\om(n)}
 \Lambda_{0,R}(pn)^2
 \ll_g \frac{(1+\log p)X}{p\{\log(X/p)\}^2},
\]
provided that
\[
 (X/p)^{1/5}\leq R\leq(X/p)^{1/3},
 \qquad p\leq R^{1/2}.
\]
The size of the contribution after summing over primes depends on
the prime cutoff. These are the conditions and the bound used in
the small-prime argument of Part~II.

\section*{Acknowledgments}
This work was supported by the National Natural Science Foundation of China (Grant Nos.~12171311 and~12671012). Y.X. acknowledges financial support from the China Scholarship Council (CSC), as well as the support and hospitality of the School of Mathematics and Statistics at UNSW Sydney through its PhD Support Scheme.

\end{document}